\documentclass{aims}
\usepackage{amsmath}
\usepackage{amssymb}
\usepackage{paralist}
\usepackage{graphics} 
\usepackage{epsfig} 
\usepackage{graphicx}  \usepackage{epstopdf} 
\usepackage[colorlinks=true]{hyperref}
\hypersetup{urlcolor=blue, citecolor=red}

  \textheight=8.2 true in
   \textwidth=5.0 true in
    \topmargin 30pt
     \setcounter{page}{1}


     
\usepackage[T1]{fontenc}
\usepackage[utf8]{inputenc}
\usepackage{lmodern}

\usepackage[table,dvipsnames]{xcolor}
\usepackage{hhline}
\usepackage{longtable}

\usepackage{algorithmic}
\setlength{\itemindent}{-5pt}

\usepackage[ruled,vlined]{algorithm2e}
\usepackage{adjustbox}
\usepackage{array}
\usepackage[caption=false,font=footnotesize]{subfig}
\usepackage{url}
\usepackage{enumitem}
\usepackage{multicol}
\usepackage{caption}
\usepackage[bb=boondox]{mathalfa}


\newtheorem{theorem}{Theorem}[section]
\newtheorem{corollary}{Corollary}

\newtheorem{lemma}[theorem]{Lemma}
\newtheorem{proposition}{Proposition}

\theoremstyle{definition}

\newtheorem{remark}{Remark}
\newtheorem{example}{Example}

\hyphenation{op-tical net-works semi-conduc-tor}

\usepackage{pgf}
\usepackage{tikz}
\usetikzlibrary{calc,fadings,decorations.pathreplacing}
\usetikzlibrary{matrix,arrows}
\usepackage{collcell}
\usepackage{xstring}

\def\minus{%
  \setbox0=\hbox{-}%
  \vcenter{%
    \hrule width\wd0 height \the\fontdimen8\textfont3%
  }%
}

\newcommand{\node}[1]{\mathrm{#1}}

\newcommand{\tr}{\operatorname{tr}}

\newcommand{\ds}{\displaystyle}
\newcommand{\ts}{\textstyle}

\newcommand{\Nn}{{\mathbb N}}

\newcommand{\Rr}{{\mathbb R}}

\newcommand{\Ll}{\mathbf{L}}
\newcommand{\Aa}{\mathbf{A}}
\newcommand{\Bb}{\mathbf{B}}

\newcommand{\Kk}{\mathbf{K}}
\newcommand{\Ee}{\mathrm{E}}

\newcommand{\krycl}{\mathcal{K}^{\mathrm{cbl}}}
\newcommand{\krygl}{\mathcal{K}^{\mathrm{gbl}}}
\newcommand{\krysl}{\mathcal{K}^{\mathrm{sbl}}}
\newcommand{\krykr}{\mathcal{K}^{\mathrm{kr}}}

\title[Krylov subspace methods to accelerate kernel machines on graphs] 
      {Krylov subspace methods to accelerate \\ kernel machines on graphs} 

\author[Wolfgang Erb]{}

\subjclass{Primary: 65F60, 65F50; Secondary: 65D15.}
 \keywords{Block Krylov subspace methods, block Lanczos methods, kernel-based approximation on graphs, graph basis functions (GBFs), kernel-based learning on graphs}

 \email{wolfgang.erb@unipd.it}



\begin{document}
\maketitle

\centerline{\scshape Wolfgang Erb}
\medskip
{\footnotesize
 \centerline{Universit{\`a} degli Studi di Padova}
   \centerline{Dipartimento di Matematica ''Tullio Levi-Civita''}
   \centerline{Via Trieste 63, 35121 Padova, Italy}
   \centerline{wolfgang.erb@unipd.it} \vspace{2mm}
   \centerline{16/01/2023}
} 

\bigskip


\begin{abstract}
In classical frameworks as the Euclidean space, positive definite kernels as well as their analytic properties are explicitly available and can be incorporated directly in kernel-based learning algorithms.  This is different if the underlying domain is a discrete irregular graph. In this case, respective kernels have to be computed in a preliminary step in order to apply them inside a kernel machine. Typically, such a kernel is given as a matrix function of the graph Laplacian. Its direct calculation leads to a high computational burden if the size of the graph is very large. In this work, we investigate five different block Krylov subspace methods to obtain cheaper iterative approximations of these kernels. We will investigate convergence properties of these Krylov subspace methods and study to what extent these methods are able to preserve the symmetry and positive definiteness of the original kernels they are approximating. We will further discuss the computational complexity and the memory requirements of these methods, as well as possible implications for the kernel predictors in machine learning.  
\end{abstract}

\section{Introduction}

Learning algorithms based on positive definite kernels are among the most robust instruments in machine learning for classification and regression tasks. Some of the principal advantages of kernel machines compared to other popular learning tools as artificial neural networks are the existence of a well-founded statistical and mathematical theory of learning, usually relying on a reproducing kernel Hilbert space, the simple adaptability of kernel methods to different domains, as well as a simple implementability of the method \cite{Schoelkopf2002,Vapnik1998}. 

In this work, we review and investigate iterative algorithms to implement kernel machines efficently on graph domains. While in Euclidean spaces most relevant kernels can be formulated explicitly, kernels that describe diffusion processes or characterize smoothness spaces on irregular graph domains have to be computed numerically. In many cases, these kernels can be characterized and calculated as matrix functions $\phi(\Ll)$ of a graph Laplacian $\Ll$. Important examples are the diffusion kernel that can be described as an exponential function of $\Ll$ \cite{KondorLafferty2002}, and the variational spline kernel which can be written as inverse power of the shifted matrix $\Ll$ \cite{Pesenson2009,Ward2018interpolating}. The calculations of these matrix functions can get cost-intensive or even prohibitive if the size of the graph gets large and when the spectral decomposition of the graph Laplacian is required. 

To avoid the spectral decomposition of $\Ll$, a well-established strategy consists in approximating the matrix function $\phi(\Ll)$ with a matrix polynomial $p_{\phi,m}(\Ll)$ of degree $m$. In typical real world graphs, every graph node has only a few neighboring nodes, and the respective graph Laplacian $\Ll$ has a sparse structure. In this case, the polynomial approximation $p_{\phi,m}(\Ll)$ of $\phi(\Ll)$ gets particularly advantageous as only a few matrix-matrix products are necessary to obtain the approximate kernel matrix $p_{\phi,m}(\Ll)$. Furthermore, if just the action of the matrix $p_{\phi,m}(\Ll)$ to a single vector $x$ is required, it suffices essentially to calculate $m$ matrix-vector products in order to obtain $p_{\phi,m}(\Ll)x$. Such polynomial approximations $p_{\phi,m}(\Ll) x$ are inherently related to Krylov subspace methods. These iterative methods have been originally developed for the approximate resolution of large linear systems of equations and eigenvalue problems and have been intensively studied since the early works of Krylov, Gantmacher, Lanczos, and Hestenes \& Stiefel, see \cite{LiesenStrakos2012,Saad2003} for a general reference. For the approximation of matrix functions, in particular of the matrix exponential, a numerical analysis of the error and the stability of Arnoldi and Lanczos iterations has been first given in \cite{Gallopoulos1992,Saad1992} and was later refined in \cite{Hochbruck1997,Musco2018,stewartleyk1996}.

For the usage in kernel machines, the matrix polynomial $p_{\phi,m}(\Ll)$ has to satisfy additional properties. In typical supervised classification or regression problems labeled data is available at a set $W$ of $N$ nodes. Based on this labeled data a kernel predictor is calculated using $N$ columns $\phi(\Ll) \Ee_W$ of the kernel matrix $\phi(\Ll)$, where $\Ee_W$ is a block of $N$ unit vectors encoding the $N$ sampling nodes in $W$. For the iterative calculation of an approximate kernel predictor, we therefore require a polynomial block $p_{\phi,m}(\Ll) \Ee_W$ that approximates the columns of $\phi(\Ll) \Ee_W$. While it is possible to use ordinary Krylov subspace methods sequentially for each column in $\Ee_W$ to get an approximation of the entire block $\phi(\Ll) \Ee_W$, it has advantages to use methods that are specifically designed for blocks. In the literature, the respective iterative methods are referred to as block Krylov subspace methods for matrix functions, see \cite{Frommer2017,Gutknecht2006,Lund2018,Schmelzer2004} for a general introduction and \cite{elsworthguettel2020,LopezSimoncini2006,simoncini1995,simoncini1996} for more specific studies. These methods generalize the ordinary Krylov subspace methods and approximate matrix functions using the information of the entire initial block $\Ee_W$ instead of a single initial vector $x$ only. 

In Section \ref{sec:blockKrylov} of this work, we will introduce and investigate five different block Krylov subspace methods for the iterative approximation of the columns $\phi(\Ll) \Ee_W$. Three of these block methods are Lanczos-type algorithms in which the polynomial $p_{\phi,m}(\Ll)$ depends on the initial block $\Ee_W$, the two other methods are Chebyshev approximations in which the matrix polynomial $p_{\phi,m}(\Ll)$ is independent of the block $\Ee_W$ and linked to a polynomial that interpolates the function $\phi$ on a specific Chebyshev grid. 

To have highly efficient iterative schemes for large graphs, it is important that the block vector $p_{\phi,m}(\Ll) \Ee_W$ approximates $\phi(\Ll) \Ee_W$ already for small degrees $m$ such that only a few matrix-vector products are required in the calculations. For this, we will give explicit error estimates in Section \ref{sec:errorestimates} and Section \ref{sec:errorcbl} that guarantee convergence of the Krylov schemes under very mild assumptions on the function $\phi$ and a rapid convergence if the function $\phi$ is smooth. 

In terms of the number $m$ of iterations, the three Lanczos methods, and in particular the classical block Lanczos method, turn out to converge considerably faster than the non-adaptive Chebyshev methods. This is indicated in the slightly better error estimates of Section \ref{sec:errorestimates} and Section \ref{sec:errorcbl} and in the numerical experiments provided in Section \ref{sec:experiments}. On the other hand, the three Lanczos methods display a considerably larger memory requirement and a larger computational cost beyond the matrix-vector products. These additional costs are discussed in Section \ref{sec:costs} and have to be taken into account when selecting a particular method for the calculations.     

One final important property of the matrix $\phi(\Ll)$ in the calculation of the kernel predictor is the positive definiteness of the collocation matrices $\Ee_W^* \phi(\Ll) \Ee_W$. This property guarantees the uniqueness of the kernel predictor in the computations. When calculating the approximation of $\phi(\Ll)$, the collocation matrices $\Ee_W^* p_{\phi,m}(\Ll) \Ee_W$ relevant for the kernel machine should therefore possibly inherit this basic property. We will prove in Section \ref{sec:blockKrylov} that this holds generally true for the classical block Lanczos method and can be forced for the Chebyshev method with a workaround. A numerical experiment in Section \ref{sec:experiments} shows that it does not hold true for the other three Krylov methods. We will further see in Section \ref{sec:calculationcbl} that using the classical block Lanczos method the kernel predictor can be calculated without the explicit knowledge of the block $p_{\phi,m}(\Ll) \Ee_W$. 

In Section \ref{sec:kernelmachinesgraphs}, we will now start this work with a brief introduction to kernel methods on graphs. We will shortly summarize some concepts introduced in \cite{erb2022,erb2020} in which the kernel columns $\phi(\Ll) \Ee_W$ of a positive definite kernel $\phi(\Ll) $ have been interpreted as generalized translates of a positive definite graph basis function (GBF). We will further recapitulate how kernel predictors are calculated in a supervised setting with given labels using a regularized least-squares (RLS) approach.

\section{Kernel Machines on Graphs} \label{sec:kernelmachinesgraphs}

We give a short synthesis on discrete kernels and how they are applied in kernel-based learning algorithms for the interpolation, regression and classification on graphs.

\subsection{Graphs and the Graph Laplacian}

Using a simplified setting, we consider simple and undirected graphs $G$ as underlying domains. All required components of a graph $G$ will be encoded in a triplet $G=(V,E,\mathbf{L})$, consisting of $n$ graph vertices $V=\{\node{v}_1, \ldots, \node{v}_{n}\}$, a set $E \subseteq V \times V$ of undirected edges, and a graph Laplacian $\Ll \in \Rr^{n \times n}$.
For kernel-based machine learning on graphs, the graph Laplacian is usually the key ingredient for the construction of the kernels. We suppose that $\Ll$ is a symmetric matrix that encodes the connection weights of the undirected edges of $G$. The entries of $\Ll$ satisfy the following general properties (see \cite[Section 13.9]{GodsilRoyle2001}:
\begin{equation} \label{eq:generalizedLaplacian}
\ds {\begin{array}{ll}\; \Ll_{i,j}<0& \text{if $i \neq j$ and the nodes $\node{v}_{i}, \node{v}_{j}$ are connected}, \\ \; \Ll_{i,j}=0 & \text{if $i\neq j $ and $\node{v}_{i}, \node{v}_{j} $ are not connected}, \\ \; \Ll_{i,i} \in \Rr & \text{for $i \in \{1, \ldots, n\}$}.\end{array}}\end{equation}  

Herein, the negative non-diagonal entries $\Ll_{i,j}$ encode the connection weights between the nodes $\node{v}_{i}$ and $\node{v}_{j}$, while the diagonal elements $\Ll_{i,i}$ provide information about the importance of a single vertex $\node{v}_{i}$. In this work we will, without loss of generality, assume that the Laplacian $\Ll$ is positive semi-definite with all its eigenvalues contained in the interval $[0,\Lambda]$, $\Lambda>0$. An important example is the standard Laplacian $\Ll_S = \mathbf{D} - \mathbf{A}$,  defined in terms of the adjacency matrix $\Aa \in \Rr^{n \times n}$ given by
\begin{equation*}
    \mathbf{A}_{i,j} := 
  \begin{cases}
    1, & \text{if $i \neq j$ and $\node{v}_i, \node{v}_j$ are connected}, \\
    0, & \text{otherwise},
  \end{cases}.
\end{equation*}
and the degree matrix $\mathbf{D}$ with the entries
\begin{equation*}
    \mathbf{D}_{i,j} := 
  \begin{cases}
    \sum_{k=0}^n \mathbf{A}_{i,k}, & \text{if } i=j, \\
    0, & \text{otherwise}.
  \end{cases}
\end{equation*}

\subsection{Positive Definite Kernels on Graphs} \label{sec:pdkernels}

A kernel on the graph $G$ is a function $K : V \times V \to \Rr$ on the Cartesian product $V \times V$ of the vertex set $V$. Linked to the kernel function $K$ is a linear operator $\mathbf{K}: \mathcal{L}(V) \to \mathcal{L}(V)$ acting on the signal space $\mathcal{L}(V) = \{x : V \to \Rr\}$ as 
\[\mathbf{K} x(\node{v}_i) = \sum_{j=1}^n K(\node{v}_i,\node{v}_j) x(\node{v}_j).\] 
By identifying a signal $x \in \mathcal{L}(G)$ with a vector $x = [x(\node{v}_1), \ldots, x(\node{v}_n)]^* \in \Rr^n$, we can represent $\mathbf{K}$ also as the $n \times n$-matrix
\[ \mathbf{K} = \begin{bmatrix} K(\node{v}_1,\node{v}_1) & K(\node{v}_1,\node{v}_2) & \ldots & K(\node{v}_1,\node{v}_n) \\
K(\node{v}_2,\node{v}_1) & K(\node{v}_2,\node{v}_2) & \ldots & K(\node{v}_2,\node{v}_n) \\
\vdots & \vdots & \ddots & \vdots \\
K(\node{v}_n,\node{v}_1) & K(\node{v}_n,\node{v}_2) & \ldots & K(\node{v}_n,\node{v}_n)
\end{bmatrix}.\]
We call the kernel $K$ symmetric positive definite (p.d.) if the corresponding matrix $\mathbf{K}$ is symmetric and positive definite, i.e., if $\mathbf{K}^* = \mathbf{K}$ and 
$$x^{*} \mathbf{K} x  = \sum_{i,j = 1}^n x(\node{v}_i) K(\node{v}_i,\node{v}_j) x(\node{v}_j)  > 0 \quad \text{for all} \ x \in \mathcal{L}(V) \setminus \{0\}.$$ 
The positive definiteness of a kernel guarantees in general the operability of the kernel machines introduced in the next section. 

\subsection{Learning with Kernels on Graphs} \label{sec:learningwithkernels}

Given a training set of $N$ nodes $W = \{\node{w}_{1}, \ldots, \node{w}_{N}\} \subset V$ and respective sampling values $\{y_1, \ldots, y_N\}$ (or labels) a kernel machine aims at finding a regression (or classification) predictor $y$ on the entire node set $V$ in terms of a linear combination of kernel functions 
\begin{equation} \label{eq:representertheorem}
y(\node{v}) = \sum_{i = 1}^N c_i K(\node{v},\node{w}_{i}).
\end{equation}
One important class of kernel machines uses the solution of the linear system
\begin{equation} \label{eq:computationcoefficients} 
 \left( \underbrace{ \begin{bmatrix} K(\node{w}_{1},\node{w}_{1}) & K(\node{w}_{1},\node{w}_{2}) & \ldots & K(\node{w}_{1},\node{w}_{N}) \\
K(\node{w}_{2},\node{w}_{1}) & K(\node{w}_{2},\node{w}_{2}) & \ldots & K(\node{w}_{2},\node{w}_{N}) \\
\vdots & \vdots & \ddots & \vdots \\
K(\node{w}_{N},\node{w}_{1}) & K(\node{w}_{N},\node{w}_{2}) & \ldots & K(\node{w}_{N},\node{w}_{N})
\end{bmatrix}}_{\mathbf{K}_W} + \gamma N \mathbf{I}_N \right)  \begin{bmatrix} c_1 \\ c_2 \\ \vdots \\ c_N \end{bmatrix}
= \begin{bmatrix} y_1 \\ y_2 \\ \vdots \\ y_N \end{bmatrix}
\end{equation} 
to obtain the coefficients $\mathrm{c} = [c_1, \ldots, c_N]^*$ of the predictor $y$. This corresponds to the calculation of a regularized least squares (RLS) solution using a quadratic error term to fit the training data and a regularization term given in terms of the kernel $K$ and a parameter $\gamma > 0$ (see \cite{BelkinMatveevaNiyogi2004,Rifkin2003,Romero2017}). The positive definiteness of the kernel $K$ guarantees that the solution to \eqref{eq:computationcoefficients} is unique. In the limit case $\gamma = 0$, the solution of \eqref{eq:computationcoefficients} yields an interpolating function $y(\node{v})$ in \eqref{eq:representertheorem} that interpolates the data $y(\node{w}_i) = y_i$ at the nodes $\node{w}_i \in W$. For classification purposes, the kernel predictor $y$ can be further processed and, for instance, the values $\mathrm{sign}(y(\node{v}))$ be calculated to classify a node $\node{v}$ in one of two classes $\pm 1$ (assuming that also the labels satisfy $y_i \in \{\pm 1\}$). Another important class of kernel machines is given by the so called support vector machines. In this case, instead of the quadratic error in the RLS solution a hinge loss functional is minimized. For a more profound introduction to support vector machines and kernel-based methods for machine learning we refer to  \cite{Schoelkopf2002,Vapnik1998,Zhu05}.

\subsection{Matrix Functions of the Laplacian and Graph Basis Functions} \label{sec:matrixfunctions}

Most relevant kernels $K$ on a graph $G$ are given as matrix functions $\phi$ of the graph Laplacian $\Ll$. If we assume that $\Ll$ is positive semi-definite with spectrum in the interval $[0,\Lambda]$ and $\phi$ is a positive function on $[0,\Lambda]$, then the kernel matrix $\Kk = \phi(\Ll)$ defined in terms of the spectral decomposition of $\Ll$ is symmetric and positive definite. 

The relevant part of the kernel $K$ for the calculation of the predictor $y$ in \eqref{eq:representertheorem} consists in the $N$ columns $K(\cdot, \node{w}_i)$, $i \in \{1, \ldots, N\}$ of the kernel matrix. We can express these elements in an alternative way by applying, for a node $\node{w} \in W$, the unit signal $e_{\node{w}}$, where 
\begin{equation*}
    e_{\node{w}}(\node{v}) := 
  \begin{cases}
    1, & \text{if } \node{v} = \node{w}, \\
    0, & \text{if } \node{v} \neq \node{w}.
  \end{cases}
\end{equation*}  
As we consider finite ordered node sets $V = \{\node{v}_1, \ldots, \node{v}_n\}$, we can naturally interpret $e_{\node{w}}$ as a canonical unit vector in $\Rr^n$ with value $1$ at the entry $j$ corresponding to the node $\node{w} = \node{v}_j$ and $0$ at all other entries $i \neq j$. Then, the elements $K(\node{v},\node{w}_i)$ can be calculated as
\begin{equation} \label{eq:kernelmatrixfunction} K(\node{v},\node{w}_i) = (\phi(\Ll) e_{\node{w}_i})(\node{v}), \quad i \in \{1, \ldots, N\}.
\end{equation}
The columns $\phi(\Ll) e_{\node{w}_i}$ of the kernel matrix can be interpreted as generalized translates of a positive definite function $f$ on the graph $G$. These generalized translates form a basis for the calculation of the predictor $y$ and the generating function $f$ has been referred to as graph basis function (GBF) in \cite{erb2022,erb2020}. The positive definite GBF $f$ and the function $\phi$ are linked by the graph Fourier transform. Namely, the Fourier coefficients of $f$ correspond to the function $\phi$ evaluated at the eigenvalues of the graph Laplacian $\Ll$. For the details of this relation, we refer to the article \cite{erb2022}. 

One of the most important examples is the exponential function $\phi(\lambda) = e^{- t \lambda}$ with a parameter $t \in \Rr$. The respective kernel on the graph is the well-known diffusion kernel $e^{ -t \mathbf{L}}$ \cite{KondorLafferty2002}. A second prominent example is the variational spline kernel defined as $(\epsilon \mathbf{I}_n + \mathbf{L})^{-s}$ \cite{Pesenson2009,Ward2018interpolating}. Choosing the parameters $\epsilon > 0$ and $s > 0$, this kernel is positive definite. 
  
For general graphs, the matrices $\phi(\Ll)$ are not a priori accessible and the calculation of the spectral decomposition of $\Ll$ is not feasible if the number $n$ of vertices is too large. In order to apply kernel machines using a kernel matrix of the form $\phi(\Ll)$ an efficient calculation of the columns $\phi(\Ll) e_{\node{w}_i}$, $i \in \{1, \ldots, N\}$ is therefore essential. 

\begin{remark}
(i)
An alternative approach to reduce the computational costs for the calculation of a matrix function $\phi(\Ll)$ is to split the graph in smaller subgraphs, using for instance metric clustering techniques as $J$-center clustering \cite{Cavoretto2021,Cavoretto2022} or hierarchical partitioning trees \cite{erb2023}. The single domains of a partitioning are then enlarged to create an overlapping cover of the graph. The main idea of this approach is to calculate the elements of $\phi(\Ll)$ locally on the single subdomains, and then to use a partition of unity to glue the components together. In \cite{Cavoretto2021}, this approach was investigated and particularly for the variational spline it turned out that with increasing overlapping of the domains the merged local kernels converged rapidly towards the global one. Nevertheless, the block Krylov methods studied in this article can also be used as subroutines in \cite{Cavoretto2021,Cavoretto2022} to speed up the local GBF calculations. \\
(ii) Kernels on graphs are not only relevant for regression or classification purposes in kernel machines.
They can also be used to reconstruct, filter and smooth graph signals \cite{Romero2017}, define diffusion wavelets \cite{Coifman2006}, and describe vertex-frequency filters \cite{shuman2016,shuman2020}. They can also be used as tools for the identification of the most influential nodes of a graph, for instance in a social network \cite{Cuomo2023}. Also in these cases, the Krylov algorithms investigated in this work can be applied to accelerate the calculations. 
\end{remark}

\section{Block Krylov subspace methods for fast generation of GBFs} \label{sec:blockKrylov}

In this section, we introduce and review five iterative schemes for the efficient calculation of a block of $N$ matrix-vector products of the form
\begin{equation} \phi(\Ll) \Ee_W, \label{eq:GBFcompute} \end{equation}
where $\Ll \in \Rr^{n \times n}$ is a symmetric positive semi-definite matrix with spectrum in $[0,\Lambda]$, $\phi$ is a positive function on $[0,\Lambda]$, and $\Ee_W \in \Rr^{n \times N}$ is the block vector 
$$\Ee_W = [e_{\node{w}_1}, \ldots, e_{\node{w}_N}] \in \Rr^{n \times N}$$ consisting of $N$ canonical basis vectors $e_{\node{w}_i}$ related to the sampling nodes $\node{w}_i \in W$. Naively, the block vector \eqref{eq:GBFcompute} can be calculated in two steps using the spectral decomposition of the graph Laplacian $\Ll$. With the spectral decomposition of $\Ll$ at hand, the matrix function $\phi(\Ll)$ can be first computed using the functional calculus by evaluating the positive function $\phi$ on the spectrum of $\Ll$. In a second step, the matrix function $\phi(\Ll)$ can then be evaluated on the block $\Ee_W$. Such a procedure is however prohibitive in terms of computational complexity and of memory requirements if the size $n$ of the graph is too large. 

\vspace{1mm}

For an efficient calculation of the basis functions \eqref{eq:GBFcompute} on large graphs $G$, we will therefore make use of the following two principles:
\begin{enumerate}
 \item[(a)] The positive function $\phi$ is approximated with a polynomial $p_{\phi,m}$ of degree $m$.
 \item[(b)] The matrix $\phi(\Ll)$ is never calculated as a whole, only the actions of $\phi(\Ll)$ to the initial block vector $\Ee_W$ are approximated. 
\end{enumerate}
The two construction principles (a) and (b) are profoundly linked with Krylov subspace methods. These principles guarantee that for the calculation of $\phi(\Ll) \Ee_W$ at most $m N$ matrix-vector products are necessary. Furthermore, if the Laplacian $\Ll$ is sparse with at most $r$ nonzero entries in a single row (or column), the complexity of the calculation of $p_m(\Ll) \Ee_W$ is at most of order $\mathcal{O}(r m N n)$.  

\subsection{Classical block Lanczos method for matrix functions} 

Krylov subspace methods use projections into the Krylov spaces
$$ \mathcal{K}_{m}(\Ll,x) = \left\{ \sum_{k=0}^{m-1} c_k \Ll^k x \ : \ c_k \in \Rr \right\}, \quad m \in \Nn, $$
to obtain approximate solutions of eigenvalue problems or linear systems of equations in cases where a direct solution gets too cost-intensive. To approximate a matrix function multiplied with a vector $x \in \Rr^n$, Krylov methods generate a polynomial function $p_{m-1}(\Ll) x$ of order $m-1$ in the Krylov space $\mathcal{K}_{m}(\Ll,x)$ that resembles the matrix-vector product $\phi(\Ll) x$.  

As the matrix function $\phi(\Ll)$ has to be evaluated not only for a single vector $x$ but for an entire block $\Ee_W$ of $N$ unit vectors, we will use block Krylov methods instead. As a first prominent example, we will consider the classical block Lanczos method (see \cite{Frommer2017,Gutknecht2006,Lund2018,Schmelzer2004} for a general introduction to block Krylov methods) with the respective Krylov space given by 
$$\krycl_m(\Ll,\Ee_W) = \left\{ \sum_{k=0}^{m-1} \Ll^k \Ee_W \mathrm{C}_k \ : \ \mathrm{C}_k \in \Rr^{N \times N} \right\}.$$ 
An orthonormal system $\{q_1, \ldots, q_{m N}\} \subset \Rr^n$ of vectors related to the classical Krylov space 
$\krycl_m(\Ll,\Ee_W)$ can be obtained by applying $m - 1 $ steps of a block Lanczos algorithm to the initial block $\mathrm{Q}_1 = [q_1, \ldots, q_N] = \Ee_W$. We store also the remaining basis elements in $n \times N$-blocks $\mathrm{Q}_k$ by setting
\[ \mathrm{Q}_k = [q_{(k-1)N + 1}, \ldots, q_{k N}], \quad k \in \{1, \ldots, m\}. \]
The blocks $\mathrm{Q}_1, \ldots, \mathrm{Q}_m$ in $\krycl_m(\Ll,\Ee_W)$ are determined in such a way that after $m-1$ steps the block Lanczos relation
\begin{equation} \label{eq:blockLanczosrelation} \Ll [\mathrm{Q}_1, \ldots \mathrm{Q}_m] = [\mathrm{Q}_1, \ldots, \mathrm{Q}_{m+1} ] \tilde{\mathbf{H}}_m,
\end{equation}
is satisfied with a block tridiagonal matrix $\tilde{\mathbf{H}}_m \in \Rr^{(m+1) N \times mN}$ of the form

\[ \tilde{\mathbf{H}}_m =
\begin{tikzpicture}[baseline={([yshift=-.5ex]current bounding box.center)}]
 \matrix (vec) [matrix of math nodes, left delimiter = {[}, right delimiter = {]}] {
 \mathrm{H}_{1,1} & \mathrm{H}_{1,2} &    {}    &  {} \\
\mathrm{H}_{2,1} & \mathrm{H}_{2,2} & \ddots & {} \\ 
       {} & \ddots  & \ddots &  \mathrm{H}_{m-1,m} \\ 
       {} &    {}    & \mathrm{H}_{m,m-1} & \mathrm{H}_{m,m}  \\ 
       {} &    {}  &     {}      & \mathrm{H}_{m+1,m}  \\
};
\node (a) at (vec-1-4.north) [above= 10pt, right=30pt]{};
\node (b) at (vec-4-4.south) [right=30pt]{};

\draw [decorate, decoration={brace, amplitude=5pt}] (a) -- (b) node[midway, right=5pt] { $\mathbf{H}_m$.};
\end{tikzpicture}
\]

Here, the blocks $\mathrm{H}_{k+1,k} \in \Rr^{N \times N}$ are upper triangular and invertible and satisfy $\mathrm{H}_{k+1,k} = \mathrm{H}_{k,k+1}^*$, while the blocks $\mathrm{H}_{k,k}$ are symmetric, i.e., $\mathrm{H}_{k,k} = \mathrm{H}_{k,k}^*$. When deleting the last $N$ rows of $\tilde{\mathbf{H}}_m$ we obtain the symmetric block Lanczos matrix $\mathbf{H}_m \in \Rr^{mN \times m N}$. In addition, the classical block Lanczos method enforces the system $\{q_1, \ldots, q_{m N}\}$ to be orthonormal. The generation of the blocks $\mathrm{H}_{k,k}$, $\mathrm{H}_{k+1,k}$ and $\mathrm{Q}_k$ via the block Lanczos iteration is described in Algorithm \ref{algorithm-block-Lanczos}.  

\setlength{\algomargin}{-0.4em}
\begin{algorithm} 
\caption{Classical Block Lanczos algorithm to approximate $\phi(\Ll) \Ee_W$}
\label{algorithm-block-Lanczos}

\begin{multicols}{2}

\begin{algorithmic}[1]
\STATE Set $\mathrm{Q}_1 = \Ee_W$ and $\mathrm{Q}_0 = \mathrm{0}$, $\mathrm{H}_{0,1} = 0$;\\[1mm]
\FOR {$k = 1$ to $m$}
\vspace{1mm}
\STATE $\mathrm{X} = \mathbf{L} \mathrm{Q}_k - \mathrm{Q}_{k-1} \mathrm{H}_{k-1,k}$; 
\STATE $\mathrm{H}_{k,k} = \mathrm{Q}_k^* \, \mathrm{X}$; 
\STATE $\mathrm{X} = \mathrm{X} - \mathrm{Q}_{k} \mathrm{H}_{k,k}$; 
\STATE Compute reduced QR decomposition of $\mathrm{X}$ such that
\[\mathrm{Q}_{k+1} \mathrm{H}_{k+1,k} = \mathrm{X},\]
with $\mathrm{Q}_{k+1} \in \Rr^{n \times N}$ containing $N$ orthonormal columns $q_{k N + 1}$, $\ldots$, $q_{(k+1) N}$ and $\mathrm{H}_{k+1,k} \in \Rr^{N \times N}$ is upper triangular; 
\STATE Set $\mathrm{H}_{k,k+1} = \mathrm{H}_{k+1,k}$;\\[1mm]
\ENDFOR 

\columnbreak

\STATE Set up $\mathbf{H}_m$ from the blocks $\mathrm{H}_{k+1,k}$, $\mathrm{H}_{k,k}$, $k \in \{1,\ldots,m\}$, and calculate
\[ \mathrm{U} = \phi(\mathbf{H}_m) \mathrm{F}_1,\]
where $\mathrm{F}_1 \in \Rr^{m N \times N}$ contains the identity matrix as first $N \times N$-block and all the remaining blocks of $\mathrm{F}_1$ are zero. \\[1mm] 

\STATE {\bfseries Return} 
$p_{\phi,m-1}^{(\mathrm{cbl})}(\Ll) \Ee_W  := [\mathrm{Q}_1, \ldots, \mathrm{Q}_m]\mathrm{U}$ as an approximation to $\phi(\Ll) \Ee_W$.
\end{algorithmic}
\end{multicols}

\end{algorithm}

For the calculation of an approximate kernel predictor, an iterative approach based on the classical block Lanczos method has some advantages. Most importantly, we will show that the linear system \eqref{eq:computationcoefficients} for the coefficients of the kernel predictor with the matrix polynomial $p_{\phi,m-1}^{(\mathrm{cbl})}(\Ll)$ as a kernel has a unique solution. This follows principally from the next theorem. 

\begin{theorem} \label{thm:classicalpositivedefinite}
Assume that the spectrum of $\Ll$ is contained in $[0,\Lambda]$ and that the function $\phi$ is positive on $[0,\Lambda]$. Then the matrix $\Ee_W^* p_{\phi,m-1}^{(\mathrm{cbl})}(\Ll) \Ee_W$ is symmetric and positive definite. 
\end{theorem}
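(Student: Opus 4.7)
The plan is to reduce the quadratic form $\Ee_W^* p_{\phi,m-1}^{(\mathrm{cbl})}(\Ll) \Ee_W$ to the top-left $N\times N$ principal block of $\phi(\mathbf{H}_m)$, and then to argue that this block is symmetric positive definite by combining the Rayleigh--Ritz interpretation of $\mathbf{H}_m$ with the positivity of $\phi$ on $[0,\Lambda]$.

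First, I would collect the blocks produced by Algorithm \ref{algorithm-block-Lanczos} into one matrix $\mathrm{Q} := [\mathrm{Q}_1, \ldots, \mathrm{Q}_m] \in \Rr^{n\times mN}$. The algorithm enforces $\mathrm{Q}^*\mathrm{Q} = \mathbf{I}_{mN}$ and, since $\mathrm{Q}_1 = \Ee_W$, we have $\Ee_W = \mathrm{Q} \mathrm{F}_1$. Substituting the definition $p_{\phi,m-1}^{(\mathrm{cbl})}(\Ll)\Ee_W = \mathrm{Q}\,\phi(\mathbf{H}_m)\mathrm{F}_1$ from the last line of the algorithm yields
\[
\Ee_W^* p_{\phi,m-1}^{(\mathrm{cbl})}(\Ll) \Ee_W \;=\; \mathrm{F}_1^* \mathrm{Q}^* \mathrm{Q}\,\phi(\mathbf{H}_m)\mathrm{F}_1 \;=\; \mathrm{F}_1^*\phi(\mathbf{H}_m)\mathrm{F}_1,
\]
which is exactly the leading $N\times N$ principal submatrix of $\phi(\mathbf{H}_m)$.

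Next, I would establish that $\mathbf{H}_m$ itself is a Rayleigh--Ritz compression of $\Ll$. Multiplying the block Lanczos relation \eqref{eq:blockLanczosrelation} on the left by $\mathrm{Q}^*$ and using the orthonormality of the columns of $[\mathrm{Q}_1,\ldots,\mathrm{Q}_{m+1}]$ kills the last block row of $\tilde{\mathbf{H}}_m$, so that $\mathbf{H}_m = \mathrm{Q}^* \Ll \, \mathrm{Q}$. From this identity, symmetry of $\mathbf{H}_m$ is immediate (since $\Ll$ is symmetric), and for any unit vector $v \in \Rr^{mN}$ the vector $u := \mathrm{Q}v$ is a unit vector in $\Rr^n$ and $v^* \mathbf{H}_m v = u^* \Ll u \in [0,\Lambda]$. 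Hence all eigenvalues of $\mathbf{H}_m$ lie in $[0,\Lambda]$.

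Since $\phi$ is positive on $[0,\Lambda]$, the functional calculus applied to the symmetric matrix $\mathbf{H}_m$ yields that $\phi(\mathbf{H}_m)$ is symmetric and has strictly positive eigenvalues, i.e.\ it is symmetric positive definite. A principal submatrix of a symmetric positive definite matrix is again symmetric positive definite (take $x \in \Rr^N\setminus\{0\}$ and consider $\tilde x = \mathrm{F}_1 x \in \Rr^{mN}\setminus\{0\}$; then $x^* (\mathrm{F}_1^*\phi(\mathbf{H}_m)\mathrm{F}_1) x = \tilde x^* \phi(\mathbf{H}_m)\tilde x > 0$). Combined with Step~1 this gives the claim. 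The only nontrivial ingredient is the identity $\mathbf{H}_m = \mathrm{Q}^*\Ll\mathrm{Q}$; everything else is standard linear algebra, so I expect no real obstacle provided the algorithm runs without a breakdown of the QR step (which is precisely the hypothesis under which the $\mathrm{H}_{k+1,k}$ are invertible as stated in the excerpt).
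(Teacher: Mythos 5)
Your proposal is correct and follows essentially the same route as the paper: reduce $\Ee_W^* p_{\phi,m-1}^{(\mathrm{cbl})}(\Ll) \Ee_W$ to the leading $N\times N$ principal submatrix $\mathrm{F}_1^*\phi(\mathbf{H}_m)\mathrm{F}_1$, use the identity $\mathbf{H}_m = \mathrm{Q}^*\Ll\mathrm{Q}$ to place the spectrum of $\mathbf{H}_m$ in $[0,\Lambda]$, and conclude via the functional calculus and the fact that principal submatrices of symmetric positive definite matrices are symmetric positive definite. The only cosmetic difference is that you argue the spectral containment directly with a Rayleigh quotient computation where the paper cites the Cauchy interlacing theorem; both rest on the same compression identity.
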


\begin{proof}
Using the same notation as in the description of Algorithm \ref{algorithm-block-Lanczos}, we can rewrite the block vector $p_{\phi,m-1}^{(\mathrm{cbl})}(\Ll) \Ee_W$ as
\[\Ee_W^* p_{\phi,m-1}^{(\mathrm{cbl})}(\Ll) \Ee_W = \Ee_W^*[\mathrm{Q}_1, \ldots, \mathrm{Q}_m] \phi(\mathbf{H}_m) \mathrm{F}_1 = \mathrm{F}_1^*\phi(\mathbf{H}_m) \mathrm{F}_1.\]
In particular, $\Ee_W^* p_{\phi,m-1}^{(\mathrm{cbl})}(\Ll) \Ee_W$ corresponds to the first $N \times N$ principal submatrix of the matrix $\phi(\mathbf{H}_m)$. As $\Ll$ is symmetric, also the block Lanczos matrix $\mathbf{H}_m$ is symmetric. Further, the block Lanczos relation \eqref{eq:blockLanczosrelation} implies the identity 
\[ [q_1, \ldots, q_{mN}]^* \Ll [q_1, \ldots, q_{mN}] = \mathbf{H}_m. \] 
These two properties in combination with the Cauchy interlacing theorem \cite[Section 10.1]{Parlett1987} guarantee that the spectrum of $\mathbf{H}_m$ is contained in the same interval $[0,\Lambda]$ as the spectrum of $\Ll$. Thus, if $\phi$ is positive on $[0,\Lambda]$, the matrix $\phi(\mathbf{H}_m)$ is symmetric and positive definite. Therefore, also the principal submatrix $\Ee_W^* p_{\phi,m-1}^{(\mathrm{cbl})}(\Ll) \Ee_W = \mathrm{F}_1^*\phi(\mathbf{H}_m) \mathrm{F}_1$ is symmetric and positive definite.  
\end{proof}

\subsection{Global block Lanczos method for matrix functions}

The classical block Lanczos method as described in the last section has numerous theoretical advantages and generally requires only a few iterations $m$ to converge. If the block size $N$ is very large, the classical block Lanczos methods displays however some serious drawbacks in terms of memory requirements, the dimensionality of the Krylov space, and the size of the Lanczos matrix $\mathbf{H}_m$. To reduce the dimension of the latter two, an alternative to the classical block method is the usage of the so-called global block Lanczos method first introduced in \cite{Jbilou1999}. 
For the global block Lanczos method, we require the Frobenius inner product and the Frobenius norm of matrix blocks $\mathrm{X}$ and $\mathrm{Y}$ given by
\[ \langle \mathrm{X}, \mathrm{Y} \rangle_F = \sum_{j = 1}^N \sum_{i = 1}^n \mathrm{X}_{i,j} \mathrm{Y}_{i,j} = \tr (\mathrm{Y}^* \mathrm{X}), \quad \|\mathrm{X}\|_F = (\langle \mathrm{X}, \mathrm{X} \rangle_F)^{1/2}.\]
The Krylov space for the global block method is then given by 
$$\krygl_m(\Ll,\Ee_W) = \left\{ \sum_{k=0}^{m-1} c_k \Ll^k \Ee_W  \ : \ c_k \in \Rr \right\},$$
i.e., every element of $\krygl_m(\Ll,\Ee_W)$ is determined as a matrix polynomial of degree $m-1$ applied to the block vector $\Ee_W$. In comparison to the classical case, the space $\krygl_m(\Ll,\Ee_W)$ is spanned by the block vectors $\Ee_W, \Ll \Ee_W, \ldots, \Ll^{m-1} \Ee_W$ and a standard Lanczos algorithm calculates an orthonormal basis $\{\mathrm{Q}_1, \ldots, \mathrm{Q}_m\}$ of this global Krylov space with respect to the Frobenius inner product and starting with the initial vector $\Ee_W$. In this way, the respective Lanczos matrix $\mathbf{H}_m$ is just a $m \times m$ tridiagonal matrix with real-valued entries $h_{k,k}, h_{k,k+1}  \in \Rr$ instead of blocks $\mathrm{H}_{k,k}, \mathrm{H}_{k,k+1} \in \Rr^{N \times N}$. The global block Lanczos orthogonalization procedure is summarized in Algorithm \ref{algorithm-global-block-Lanczos}. In comparison to the classical block method, we can not guarantee that the submatrix
$\Ee_W^* p_{\phi,m-1}^{(\mathrm{gbl})}(\Ll) \Ee_W$ is positive semi-definite for the global method. We can however guarantee its symmetry. The following proposition follows simply by the fact that for a symmetric matrix also the matrix polynomial $p_{\phi,m-1}^{(\mathrm{gbl})}(\Ll)$ is symmetric.  

\begin{proposition}
The matrix $\Ee_W^* p_{\phi,m-1}^{(\mathrm{gbl})}(\Ll) \Ee_W \in \Rr^{N \times N}$ is symmetric.   
\end{proposition}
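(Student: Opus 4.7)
The plan hinges on the fact that, unlike the classical block Lanczos approximation, the global variant produces an approximation that is genuinely a \emph{scalar} polynomial in $\Ll$ applied to the block $\Ee_W$. First I would make this explicit: by the very definition of the global Krylov space
\[
\krygl_m(\Ll,\Ee_W) = \Bigl\{ \sum_{k=0}^{m-1} c_k \Ll^k \Ee_W \ : \ c_k \in \Rr \Bigr\},
\]
every element of $\krygl_m(\Ll,\Ee_W)$ has the form $p(\Ll)\Ee_W$ for some real polynomial $p$ of degree at most $m-1$. Since the global block Lanczos approximation $p_{\phi,m-1}^{(\mathrm{gbl})}(\Ll)\Ee_W$ lies in $\krygl_m(\Ll,\Ee_W)$ by construction, this identifies $p_{\phi,m-1}^{(\mathrm{gbl})}$ as a scalar polynomial with real coefficients. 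One can verify this directly by inspecting Algorithm~\ref{algorithm-global-block-Lanczos}: all Frobenius inner products and norms are real, so the tridiagonal matrix $\mathbf{H}_m$ has real entries; a Cauchy-interlacing argument analogous to the one used in Theorem~\ref{thm:classicalpositivedefinite} places the spectrum of $\mathbf{H}_m$ in $[0,\Lambda]$, and since $\phi$ is real-valued there, $\phi(\mathbf{H}_m)$ is real, so the coefficient vector used to assemble the approximation is real.

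With this structural fact in hand, the rest is immediate. Since $\Ll$ is symmetric, so is every power $\Ll^k$, and hence any real linear combination $p_{\phi,m-1}^{(\mathrm{gbl})}(\Ll) = \sum_{k=0}^{m-1} c_k \Ll^k$ is symmetric. Then
\[
\bigl(\Ee_W^* p_{\phi,m-1}^{(\mathrm{gbl})}(\Ll) \Ee_W\bigr)^* \;=\; \Ee_W^* \bigl(p_{\phi,m-1}^{(\mathrm{gbl})}(\Ll)\bigr)^* \Ee_W \;=\; \Ee_W^* p_{\phi,m-1}^{(\mathrm{gbl})}(\Ll) \Ee_W,
\]
which is exactly the claim.

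There is no substantial obstacle here. The conceptual contrast — and the reason the analogous statement for the classical block method required the more delicate Rayleigh-quotient argument of Theorem~\ref{thm:classicalpositivedefinite} — is that the classical method uses \emph{matrix} coefficients $\mathrm{C}_k$, so $p_{\phi,m-1}^{(\mathrm{cbl})}(\Ll)\Ee_W$ is not of the form $(\text{scalar polynomial in }\Ll)\,\Ee_W$, and one has no directly accessible symmetric operator to exploit. In the global variant this subtlety disappears, which is precisely why symmetry drops out for free, though (as the surrounding discussion notes) positive definiteness does not.
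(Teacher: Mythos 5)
Your proof is correct and follows essentially the same route as the paper, which simply observes that $p_{\phi,m-1}^{(\mathrm{gbl})}(\Ll)$ is a real scalar polynomial in the symmetric matrix $\Ll$ and hence symmetric. The additional detail you supply about why the coefficients are real scalars is a faithful elaboration of that one-line argument rather than a different approach.
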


\begin{algorithm} 
\caption{Global Block Lanczos algorithm to approximate $\phi(\Ll) \Ee_W$}
\label{algorithm-global-block-Lanczos}

\begin{multicols}{2}

\begin{algorithmic}[1]
\STATE Set $\mathrm{Q}_1 = \Ee_W/\sqrt{N}$ and $\mathrm{Q}_0 = \mathrm{0}$, $h_{0,1} = 0$;\\[3mm]
\FOR {$k = 1$ to $m$} 
\vspace{1mm}
\STATE $\mathrm{X} = \mathbf{L} \mathrm{Q}_k - \mathrm{Q}_{k-1} h_{k-1,k}$; \\[1mm]
\STATE $h_{k,k} = \tr (\mathrm{Q}_k^* \, \mathrm{X})$; \\[1mm]
\STATE $\mathrm{X} = \mathrm{X} - \mathrm{Q}_{k} h_{k,k}$; \\[1mm]
\STATE Compute norm $h_{k+1,k} = \| \mathrm{X} \|_F$ and set 
$\mathrm{Q}_{k+1} = \mathrm{X}/h_{k+1,k}$; \\[1mm]
\STATE Set $h_{k,k+1} = h_{k+1,k}$;\\[1mm]
\ENDFOR 

\columnbreak 

\STATE Set up $\mathbf{H}_m$ from the real numbers $h_{k+1,k}$, $h_{k,k}$, $k \in \{1,\ldots,m\}$, and calculate the vector
\[ u = \sqrt{N} \phi(\mathbf{H}_m) f_1,\]
with $f_1 \in \Rr^{m}$ being the first canonical vector in $\Rr^{m}$;\\[1mm] 

\STATE {\bfseries Return} 
\begin{align*}
p_{\phi,m-1}^{(\mathrm{gbl})}(\Ll) \Ee_W  &:= \sum_{k=1}^m u_k \mathrm{Q}_k \\ &= [\mathrm{Q}_1, \ldots, \mathrm{Q}_m] (u \otimes \mathbf{I}_N)
\end{align*} as an approximation to $\phi(\Ll) \Ee_W$.
\end{algorithmic}

\end{multicols}

\end{algorithm}

\subsection{Sequential Lanczos method for matrix functions}

For the classical as well as for the global block Lanczos method the matrix $[\mathrm{Q}_1, \ldots, \mathrm{Q}_m]$ generated during the Lanczos process has to be stored in order to calculate the approximation $\phi(\Ll) \Ee_W$. For large dimensions $n$ and block sizes $N$ this might not be feasible. One simple possibility to avoid the dependency of the storage costs on the block size is to use a sequential Lanczos method in which an ordinary non-block Lanczos method is applied independently to each single column $e_{\node{w}_i}$ of $\Ee_W$. This sequential procedure generates approximations in the Krylov spaces
$$\krysl_m(\Ll,\Ee_W) = \mathcal{K}_{m}(\Ll,e_{\node{w}_1}) \times \cdots \times \mathcal{K}_{m}(\Ll,e_{\node{w}_N}) = \left\{ \sum_{k=0}^{m-1} \Ll^k \Ee_W c_k \ : \ c_k \in \Rr^{N} \right\}.$$
The respective approximant $p_{\phi,m-1}^{(\mathrm{sbl})}(\Ll) \Ee_W$ for $\phi(\Ll) \Ee_W$ is given as 
\[p_{\phi,m-1}^{(\mathrm{sbl})}(\Ll) \Ee_W = [p_{\phi,m-1}(\Ll)e_{\node{w}_1}, \ldots, p_{\phi,m-1}(\Ll)e_{\node{w}_N} ], \]
where 
$$p_{\phi,m-1}(\Ll)e_{\node{w}_i} := [q_1^{(\node{w}_i)}, \ldots, q_m^{(\node{w}_i)}] \mathbf{H}_m^{(\node{w}_i)} f_1$$ denotes the result of an ordinary Lanczos method after $m$ steps to obtain a matrix function applied to the vector $e_{\node{w}_i}$. This non-block Lanczos method corresponds precisely to the output of 
Algorithm \ref{algorithm-block-Lanczos} or Algorithm \ref{algorithm-global-block-Lanczos} with $N=1$ applied to a single column $e_{\node{w}_i}$ of the block $\Ee_W$. For the sequential Lanczos method, we will use the tridiagonal matrix 
\[\mathbf{H}_m = \operatorname{diag}(\mathbf{H}_m^{(\node{w}_1)}, \ldots, \mathbf{H}_m^{(\node{w}_N)}) \in \Rr^{mN \times mN}\]
as a unified Lanczos matrix for the entire block. In this way, we can write the matrix-vector product $p_{\phi,m-1}^{(\mathrm{sbl})}(\Ll) \Ee_W$ alternatively also as
\[p_{\phi,m-1}^{(\mathrm{sbl})}(\Ll) \Ee_W = [q_1^{\node{w}_1}, \ldots, q_m^{\node{w}_1}, \ldots, q_1^{\node{w}_N}, \ldots, q_m^{\node{w}_N} ] \mathbf{H}_m (\mathbf{I}_N \otimes f_1 ). \]

\subsection{Chebyshev polynomial approximation for matrix functions} \label{sec:chebyshev}

A final simple approach to obtain a polynomial approximation of the block $\phi(\Ll) \Ee_W$ is given by the approximation of the function $\phi$ in terms of Chebyshev polynomials. 
As before, we assume that the spectrum of the graph Laplacian $\Ll$ is contained in the interval $[0,\Lambda]$. Then, we can approximate the function $\phi$ by creating a polynomial interpolant of degree $m$ based on function values of $\phi$ on a dilated and shifted Chebyshev-Lobatto grid $\{ \frac{\Lambda}{2} (1 - \cos(\frac{\pi j}{m})) \, : \, j \in \{0, \ldots, m\}\}$. This provides an approximation of the function $\phi$ of the form 
\begin{equation} \label{eq:apprchebyshevseries} p_{\phi,m}^{(\mathrm{cheb})}(\lambda) =  \sum_{k=0}^m c_k(\phi) T_k\left( 1 -  \ts \frac{ 2}{\Lambda} \lambda\right),\end{equation}
where $T_k(\lambda) = \cos(k \arccos(\lambda))$, $k \in \Nn_0$, denote the Chebyshev polynomials of the first kind on the interval $[-1,1]$. The coefficients $c_k(\phi)$ are computed as
\begin{equation} \label{eq:chebcoeff} c_k(\phi) = \frac{\phi(0) + (-1)^k \phi(\Lambda)}{m} + \frac{2}{m}\sum_{j = 0}^m \phi\left( \ts \frac{\Lambda}{2} (1 - \cos(\frac{\pi j}{m}))\right) \cos\left(\frac{\pi j k }{m} \right)  \end{equation}
and require only the knowledge of the function $\phi$ on the altered Chebyshev grid $\{ \frac{\Lambda}{2} (1 - \cos(\frac{\pi j}{m})) \, : \, j \in \{0, \ldots, m\}\} \subset [0,\Lambda]$. The calculation of the sum in \eqref{eq:chebcoeff} can be performed efficiently by a fast Fourier or a fast cosine transform. Once the coefficients $c_k(\phi)$ are calculated, the Chebyshev approximation $p_{\phi,m}^{(\mathrm{cheb})}(\Ll) \Ee_W$ of the matrix function $\phi(\Ll) \Ee_W$ can be evaluated in terms of matrix-vector products and simple vector space operations by using the three-term recurrence relation
\[T_{k+1}(\lambda) = 2 \lambda T_k(\lambda) - T_{k-1}(\lambda), \quad T_1(\lambda) = 1, \; T_0(\lambda) = 1,\]
of the Chebyshev polynomials $T_k$ to generate the blocks $T_k(\mathbf{I}_n - \ts \frac{2}{\Lambda}\Ll) \Ee_W$. Compared to the block Lanczos methods discussed in the previous sections the Chebyshev method requires no memory to store an orthogonal basis of the Krylov space. On the other hand, the convergence of $p_{\phi,m}^{(\mathrm{cheb})}(\Ll) \Ee_W$ towards $\phi(\Ll) \Ee_W$ is in general slower compared to the discussed block Lanczos methods. This fact is well-known for classical Lanczos methods applied to a single vector (see for instance \cite{Bergamaschi2000}), but will also get theoretically and practically apparent for blocks in the upcoming sections.  

\subsubsection{Enforcing positive semi-definiteness of the approximation} The definition in \eqref{eq:apprchebyshevseries} guarantees that the matrix $p_{\phi,m}^{(\mathrm{cheb})}(\Ll)$ is symmetric . It does however not ensure that the Chebyshev interpolation polynomial $p_{\phi,m}^{(\mathrm{cheb})}(\lambda)$ is non-negative on $[0,\Lambda]$, and, thus, that $p_{\phi,m}^{(\mathrm{cheb})}(\Ll)$ is a positive semi-definite matrix. Our numerical experiments in Section \ref{sec:experiments} will in fact show that the matrices $p_{\phi,m}^{(\mathrm{cheb})}(\Ll)$ can have negative eigenvalues. In order to enforce positive semi-definiteness, we can however make use of the following simple workaround: we approximate the square root $\sqrt{\phi}$ by the polynomial $p_{\phi^{1/2},\lfloor m/2 \rfloor}^{(\mathrm{cheb})}$ and get then the non-negative polynomial 
$$p_{\phi,m}^{(\mathrm{cheb}^2)}(\lambda) = \left(p_{\phi^{1/2},\lfloor m/2 \rfloor}^{(\mathrm{cheb})}(\lambda)\right)^2$$ 
as an approximation of the function $\phi$. While this ensures that the matrix $p_{\phi,m}^{(\mathrm{cheb}^2)}(\Ll)$ is positive semi-definite, the convergence of $p_{\phi,m}^{(\mathrm{cheb}^2)}$ towards $\phi$ is in general slower than the convergence of the interpolation polynomial $p_{\phi,m}^{(\mathrm{cheb})}$ towards $\phi$. This will be visible in the numerical experiments at the end of this manuscript.

\section{Error estimates} \label{sec:errorestimates}

In the following, we provide common error estimates for all considered block Krylov methods. Central for the three block Lanczos methods is the following auxiliary result.

\begin{lemma} \label{lem:errorkrylov}
Let $\Ll \in \Rr^{n \times n}$ be symmetric, and $p_{m-1}$ be any polynomial of degree $m-1$, approximating $\phi(\lambda)$ on the interval $[0,\Lambda]$ with the residual $r_{m}(\lambda)=\phi(\lambda) - p_{m-1}(\lambda)$. Then, we get for all three block Lanczos methods $\mathrm{kr}\in \{\mathrm{cbl}, \mathrm{gbl}, \mathrm{sbl}\}$
$$
\left\| \phi(\Ll) \Ee_W - p_{\phi,m-1}^{(\mathrm{kr})}(\Ll) \Ee_W \right\|_F \leq \sqrt{N} \left( \|r_{m}(\Ll)\|_2+ \|r_{m}(\mathbf{H}_m)\|_2\right).
$$
\end{lemma}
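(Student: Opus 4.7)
The plan is to exploit the fundamental exactness property common to all three block Lanczos variants: for any polynomial $p$ of degree at most $m-1$, the method returns $p(\Ll)\Ee_W$ exactly. Writing $\phi = p_{m-1} + r_m$ and using linearity of the algorithm's dependence on the target function, the error then reduces to a difference involving only the residual $r_m$, which is then bounded by the two spectral norms.

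Concretely, for the classical method I would first establish the invariance identity $p(\Ll)\Ee_W = [\mathrm{Q}_1,\dots,\mathrm{Q}_m]\,p(\mathbf{H}_m)\mathrm{F}_1$ for every polynomial $p$ of degree at most $m-1$. This is a standard consequence of repeated application of the block Lanczos relation \eqref{eq:blockLanczosrelation} to $\mathrm{Q}_1=\Ee_W$, and it lets us write
\[
\phi(\Ll)\Ee_W - p_{\phi,m-1}^{(\mathrm{cbl})}(\Ll)\Ee_W = r_m(\Ll)\Ee_W - [\mathrm{Q}_1,\dots,\mathrm{Q}_m]\,r_m(\mathbf{H}_m)\mathrm{F}_1.
\]
Applying the triangle inequality in Frobenius norm, using the submultiplicativity $\|AB\|_F \le \|A\|_2 \|B\|_F$, the orthonormality of $[\mathrm{Q}_1,\dots,\mathrm{Q}_m]$, and the elementary computations $\|\Ee_W\|_F = \sqrt{N}$ and $\|\mathrm{F}_1\|_F = \sqrt{N}$, yields the claimed bound.

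For the global variant the corresponding exactness reads $p(\Ll)\Ee_W = \sqrt{N}\sum_{k=1}^{m}[p(\mathbf{H}_m)f_1]_k\,\mathrm{Q}_k$, which follows inductively from the global block Lanczos recursion together with the normalization $\mathrm{Q}_1 = \Ee_W/\sqrt{N}$. Subtracting the output of Algorithm \ref{algorithm-global-block-Lanczos}, the Frobenius-orthonormality of the $\mathrm{Q}_k$ collapses the sum to $\|r_m(\mathbf{H}_m)f_1\|_2 \le \|r_m(\mathbf{H}_m)\|_2$, and the extra factor $\sqrt{N}$ from the algorithm's output precisely matches that in the first term $\|r_m(\Ll)\Ee_W\|_F \le \sqrt{N}\,\|r_m(\Ll)\|_2$. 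For the sequential method, the estimate is proved column-by-column using the standard single-vector Lanczos exactness, and then assembled via $\|\phi(\Ll)\Ee_W - p_{\phi,m-1}^{(\mathrm{sbl})}(\Ll)\Ee_W\|_F^2 = \sum_{i=1}^N \|\phi(\Ll)e_{\node{w}_i} - p_{\phi,m-1}(\Ll)e_{\node{w}_i}\|_2^2$, observing that the block-diagonal structure of $\mathbf{H}_m = \operatorname{diag}(\mathbf{H}_m^{(\node{w}_i)})$ gives $\|r_m(\mathbf{H}_m^{(\node{w}_i)})\|_2 \le \|r_m(\mathbf{H}_m)\|_2$ uniformly in $i$, so the sum of $N$ squared bounds produces the factor $\sqrt{N}$.

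I expect no genuine obstacle in the analytic estimates; the only substantive step is the exactness property for the three methods, which has slightly different forms in the three cases. The main care is in the bookkeeping: tracking how the scalar versus block representation ($\mathrm{F}_1$ vs.\ $f_1$, and the $\sqrt{N}$ normalization) interacts with the Frobenius-norm bound, so that all three variants end up with the common form stated in the lemma.
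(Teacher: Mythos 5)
Your proposal is correct and follows essentially the same route as the paper: the exactness identity $p_{m-1}(\Ll)\Ee_W = [\mathrm{Q}_1,\dots,\mathrm{Q}_m]\,p_{m-1}(\mathbf{H}_m)\mathrm{F}_1$, the decomposition of the error into $r_m(\Ll)\Ee_W - [\mathrm{Q}_1,\dots,\mathrm{Q}_m]\,r_m(\mathbf{H}_m)\mathrm{F}_1$, and the triangle inequality with $\|\mathbf{A}\mathbf{B}\|_F \le \|\mathbf{A}\|_2\|\mathbf{B}\|_F$ and $\|\Ee_W\|_F=\|\mathrm{F}_1\|_F=\sqrt{N}$. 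The paper only details the classical case and asserts the other two follow with minor modifications, whereas you spell out the $\sqrt{N}$-normalization bookkeeping for the global variant and the column-wise assembly with the block-diagonal $\mathbf{H}_m$ for the sequential one, both of which are accurate.
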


\begin{proof}
We prove this result for $\mathrm{kr} = \mathrm{cbl}$, i.e., for the classical block Lanczos method. We thereby follow an argumentation line developed in \cite[Lemma A.1]{Gallopoulos1992} for the calculation of the matrix exponential $e^{-\Ll} x$ using an ordinary Lanczos method. For the global and the sequential block Lanczos schemes, the proof is, up to some minor modifications regarding technical particularities, the same. 

\vspace{1mm}

By definition of $p_{m-1}$ and $r_m$, we have $\phi(\lambda) = p_{m-1}(\lambda)+r_m(\lambda)$, and thus 
\begin{equation} \label{eq:proof1}
\phi(\Ll) \Ee_W = p_{m-1}(\Ll) \Ee_W + r_{m}(\Ll) \Ee_W.
\end{equation}
With an induction argument and the block Lanczos relation \eqref{eq:blockLanczosrelation} for the classical regime, we further have the identity 
$$\Ll^j \Ee_W = [\mathrm{Q}_1, \ldots, \mathrm{Q}_m] \mathbf{H}_m^j \mathrm{F}_1, \quad j \in \{0, \ldots, m-1\}.$$ 
The same identity holds then true for all polynomials $p_{m-1}$ of degree $m-1$, i.e.,
$$
p_{m-1}(\Ll) \Ee_W = [\mathrm{Q}_1, \ldots, \mathrm{Q}_m] p_{m-1} \left(\mathbf{H}_m\right) \mathrm{F}_1.
$$
By the relation of $p_{m-1}$ with the residual $r_m$, we can write
$$
p_{m-1}\left(\mathbf{H}_m\right) \mathrm{F}_1 = \phi(\mathbf{H}_m) \mathrm{F}_1 - r_m \left(\mathbf{H}_m\right) \mathrm{F}_1.
$$
Substituting the latter two identities in equation \eqref{eq:proof1}, we get
\begin{equation*}
\phi(\Ll) \Ee_W =  [\mathrm{Q}_1, \ldots, \mathrm{Q}_m] \phi(\mathbf{H}_m) \mathrm{F}_1 +
r_m(\Ll) \Ee_W - [\mathrm{Q}_1, \ldots, \mathrm{Q}_m] r_m\left(\mathbf{H}_m\right) \mathrm{F}_1. 
\end{equation*}
Finally, applying the Frobenius norm, we get the estimate
\begin{align*}
\| \phi(\Ll) \Ee_W &-  [\mathrm{Q}_1, \ldots, \mathrm{Q}_m] \phi(\mathbf{H}_m) \mathrm{F}_1 \|_F =
\| r_m(\Ll) \Ee_W - [\mathrm{Q}_1, \ldots, \mathrm{Q}_m] r_m\left(\mathbf{H}_m\right) \mathrm{F}_1 \|_F \\ & \leq \| r_m(\Ll) \Ee_W \|_F + \| [\mathrm{Q}_1, \ldots, \mathrm{Q}_m] r_m\left(\mathbf{H}_m\right)  \mathrm{F}_1 \|_F \\
&\leq \| r_m(\Ll)\|_2 \| \Ee_W \|_F + \| [\mathrm{Q}_1, \ldots, \mathrm{Q}_m] r_m\left(\mathbf{H}_m\right)\|_2  \| \mathrm{F}_1 \|_F \\
&\leq \sqrt{N} \| r_m(\Ll)\|_2 + \sqrt{N} \| r_m\left(\mathbf{H}_m\right)\|_2.
\end{align*}
In these last steps, we just used elementary properties of the Frobenius norm as the triangle inequality, the compatibility property $\|\mathbf{A} \mathbf{B} \|_F \leq \|\mathbf{A}\|_2 \|\mathbf{B}\|_F$ and $\|\Ee_W\|_F = \|\mathrm{F}_1\|_F = \sqrt{N}$. 
\end{proof}

\begin{theorem} \label{thm:errorestimates}
Let $\Ll \in \Rr^{n \times n}$ be symmetric with spectrum in $[0,\Lambda]$. Then, we get for all three block Lanczos methods $\mathrm{kr}\in \{\mathrm{cbl}, \mathrm{gbl}, \mathrm{sbl}\}$ the estimate
$$
\left\| \phi(\Ll) \Ee_W - p_{\phi,m}^{(\mathrm{kr})}(\Ll) \Ee_W \right\|_F \leq 2 \sqrt{N} E_{m}(\phi),
$$
where
\[E_{m}(\phi) = \min_{p \in \Pi_{m}} \max_{\lambda \in [0,\Lambda]}|\phi(\lambda) - p(\lambda)|\]
denotes the best approximation error for the function $\phi$ in the space of polynomials $\Pi_{m}$ of degree less or equal to $m$ on the interval $[0,\Lambda]$. On the other hand, for the Chebyshev method, we have the bound  
$$
\left\| \phi(\Ll) \Ee_W - p_{\phi,m}^{(\mathrm{cheb})}(\Ll) \Ee_W \right\|_F \leq \sqrt{N} \left(2 + \textstyle \frac{2}{\pi} \log (m+1) \right) E_{m}(\phi).
$$
\end{theorem}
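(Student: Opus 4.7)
The plan splits naturally into the three block-Lanczos cases, and then the Chebyshev case. For the block-Lanczos bounds I would apply Lemma~\ref{lem:errorkrylov} with the index shifted by one, and then turn each matrix residual into a scalar uniform error via the functional calculus. Concretely, I re-index by replacing $m$ with $m+1$ in the lemma so that the approximant on the left becomes $p_{\phi,m}^{(\mathrm{kr})}(\Ll)$ (a polynomial of degree $m$), the residual becomes $r_{m+1}(\lambda)=\phi(\lambda)-p_m(\lambda)$ for any $p_m\in\Pi_m$, and the tridiagonal compression is $\mathbf{H}_{m+1}$. Choosing $p_m$ to be the best uniform approximant of $\phi$ on $[0,\Lambda]$ gives $\|r_{m+1}\|_{L^\infty[0,\Lambda]}=E_m(\phi)$.

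Next, since $\Ll$ is symmetric with $\sigma(\Ll)\subset[0,\Lambda]$, the spectral theorem yields
\[\|r_{m+1}(\Ll)\|_2=\max_{\lambda\in\sigma(\Ll)}|r_{m+1}(\lambda)|\leq \|r_{m+1}\|_{L^\infty[0,\Lambda]}=E_m(\phi).\]
To bound the second term identically I need $\sigma(\mathbf{H}_{m+1})\subset[0,\Lambda]$. For the classical block case this is exactly the Cauchy-interlacing argument already written out in the proof of Theorem~\ref{thm:classicalpositivedefinite}. For the global and sequential schemes the same structure applies: $\mathbf{H}_{m+1}$ is the Frobenius-inner-product Rayleigh--Ritz compression of the symmetric operator $\Xx\mapsto\Ll\Xx$ on $\Rr^{n\times N}$ (whose spectrum coincides with $\sigma(\Ll)$), and in the sequential case a block-diagonal assembly of ordinary Lanczos compressions each of which inherits the spectral bound from standard Cauchy interlacing. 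In all three cases, Rayleigh-quotient considerations force $\sigma(\mathbf{H}_{m+1})\subset[0,\Lambda]$, so $\|r_{m+1}(\mathbf{H}_{m+1})\|_2\leq E_m(\phi)$. Summing the two residual bounds and invoking Lemma~\ref{lem:errorkrylov} gives the claimed $2\sqrt{N}\,E_m(\phi)$.

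For the Chebyshev case I would appeal to the classical Lebesgue theory of polynomial interpolation. Let $\Lambda_m$ denote the Lebesgue constant of Chebyshev--Lobatto interpolation at $m+1$ nodes on $[-1,1]$. Affine invariance of the Lebesgue constant means the dilated--shifted grid on $[0,\Lambda]$ has the same constant, and the classical Ehlich--Zeller bound gives $\Lambda_m\leq 1+\tfrac{2}{\pi}\log(m+1)$. Lebesgue's inequality then yields
\[\|\phi-p_{\phi,m}^{(\mathrm{cheb})}\|_{L^\infty[0,\Lambda]}\leq(1+\Lambda_m)\,E_m(\phi)\leq\Bigl(2+\tfrac{2}{\pi}\log(m+1)\Bigr)E_m(\phi).\]
I then convert this scalar bound to a block bound in two standard steps: the functional-calculus inequality $\|f(\Ll)\|_2\leq\|f\|_{L^\infty[0,\Lambda]}$ applied to $f=\phi-p_{\phi,m}^{(\mathrm{cheb})}$, and the Frobenius/operator compatibility $\|\Aa\Ee_W\|_F\leq\|\Aa\|_2\|\Ee_W\|_F=\sqrt{N}\,\|\Aa\|_2$.

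The only step that is more than bookkeeping is the spectral inclusion $\sigma(\mathbf{H}_{m+1})\subset[0,\Lambda]$ for the global and sequential variants; this is where I expect to spend the most care, because the global Lanczos matrix must be recognised as a Rayleigh--Ritz projection with respect to the Frobenius inner product on $\Rr^{n\times N}$ (rather than on $\Rr^n$), while the sequential case needs a per-column argument aggregated by block diagonality. Once that inclusion is in hand, everything else is an immediate application of Lemma~\ref{lem:errorkrylov} and standard Chebyshev-interpolation estimates.
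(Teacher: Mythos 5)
Your proposal is correct and follows essentially the same route as the paper: Lemma \ref{lem:errorkrylov} applied with the index shifted to $m+1$ and the best uniform approximant as $p_m$, the spectral inclusion $\sigma(\mathbf{H}_{m+1})\subset[0,\Lambda]$ via Cauchy interlacing (which the paper invokes uniformly for all three Lanczos variants, where you spell out the Rayleigh--Ritz justification for the global and sequential cases in more detail), and the Lebesgue-constant bound for Chebyshev--Lobatto interpolation (the content of the Trefethen reference the paper cites) for the second estimate.
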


\begin{proof}
For the three block Lanczos methods $\mathrm{kr}\in \{\mathrm{cbl}, \mathrm{gbl}, \mathrm{sbl}\}$, we can use the result of Lemma \ref{lem:errorkrylov} together with the fact that $\Ll$ is symmetric. As the matrix $\mathbf{H}_{m+1}$ is a representation of the projection of $\Ll$ into the Krylov space $\krykr_{m+1}(\Ll,\Ee_W)$, the Cauchy interlacing theorem \cite[Section 10.1]{Parlett1987}) guarantees that the spectrum of $\mathbf{H}_{m+1}$ is also contained in the interval $[0,\Lambda]$. Therefore, Lemma \ref{lem:errorkrylov} and the properties of the spectral matrix norm yield
\begin{align*} \left\| \phi(\Ll) \Ee_W - p_{\phi,m}^{(\mathrm{kr})}(\Ll) \Ee_W \right\|_F 
&\leq \sqrt{N} \left( \|r_{m+1}(\Ll)\|_2+ \|r_{m+1}(\mathbf{H}_{m+1})\|_2\right)  \\
& =  \sqrt{N} \left( \max_{\lambda \; \text{eig of $\Ll$}} |r_{m+1}(\lambda)|+ \max_{\lambda \; \text{eig of $\mathbf{H}_{m+1}$}} |r_{m+1}(\lambda)|\right)\\
&\leq 2 \sqrt{N} \max_{\lambda \in [0,\Lambda]} |\phi(\lambda) - p_m(\lambda)|
\end{align*}
for every polynomial $p_m$ of degree less or equal to $m$. This gives the first statement of the theorem. The second statement about the Chebyshev method follows from well-known error estimates for the polynomial interpolant $p_{\phi,m}^{(\mathrm{cheb})}(\lambda)$ on the Chebyshev-Lobatto grid \cite[Theorem 16.1]{Trefethen2013}:
\begin{align*} \left\| \phi(\Ll) \Ee_W - p_{\phi,m}^{(\mathrm{cheb})}(\Ll) \Ee_W \right\|_F 
&\leq \left\| \phi(\Ll) \Ee_W - p_{\phi,m}^{(\mathrm{cheb})}(\Ll) \right\|_2 \left\| \Ee_W \right\|_F  \\
& = \sqrt{N} \max_{\lambda \; \text{eig of $\Ll$}} |\phi(\lambda) - p_{\phi,m}^{(\mathrm{cheb})}(\lambda)|\\
&\leq \sqrt{N} \max_{\lambda \in [0,\Lambda]} |\phi(\lambda) - p_{\phi,m}^{(\mathrm{cheb})}(\lambda)| \\
&\leq  \sqrt{N} \left(2 + \textstyle \frac{2}{\pi} \log (m+1) \right) E_{m}(\phi).
\end{align*}
\end{proof}

Depending on the particular characteristics of the function $\phi$, there are numerous more or less explicit estimates for the best polynomial approximation $E_m(\phi)$ in the literature. We provide two classical examples from univariate approximation. 

\begin{example}
If the function $\phi$ has $r$ bounded derivatives such that $|\phi^{(r)}(\lambda)| \leq M_{r}$ for all $\lambda \in [0,\Lambda]$, then the best polynomial approximation can be bounded by
\[ E_m(\phi) \leq \frac{C_r \Lambda^r M_r }{m^r},\]
with a constant $C_r$ that depends only on $r$ \cite[Chapter VI, Section 2, Corollary 2]{Natanson1964}.
\end{example} 

\begin{example}
As a second example, we assume that $\phi$ is an analytic function on $[0,\Lambda]$ and analytically continuable to the open Bernstein ellipse $E_{\rho}$ with foci $\{0,\Gamma\}$ and sum of the half axes equal to $\frac{\Lambda}{2} \rho$. Further, we suppose that $\phi$ is bounded on $E_{\rho}$ by $|\phi(\lambda)| \leq M$. Then, the best polynomial approximation can be bounded by \cite[Chapter 7, Section 8]{DeVore1993}.
\[ E_m(\phi) \leq \frac{2 M}{ \rho - 1} \rho^{-m}.\]
With the same assumptions on the function $\phi$, a similar estimate (with an additional factor $2$) is obtainable for the uniform error $\|\phi - p_{\phi,m}^{(\mathrm{cheb})}\|_\infty$ of the Chebyshev polynomial approximation considered in Section \ref{sec:chebyshev}, cf. \cite[Theorem 8.2]{Trefethen2013}.
\end{example} 

\begin{example}
For the exponential function $\phi(\lambda) = e^{-t \lambda}$, $t > 0$, an explicit upper bound for the error $E_m(e^{-t \lambda})$ is given in \cite[Theorem 3]{stewartleyk1996}. This bound reads as
\begin{equation} \label{eq:boundapproximation} E_m(e^{-t \lambda}) \leq \left\{ \begin{array}{ll} 2 e^{- \frac{b (m+1)^2}{t\Lambda}} \big( 1 + \ts \sqrt{\frac{t\Lambda \pi}{4b}}\big)+ 2 \frac{ d^{t\Lambda}}{1-d}& \text{if $m \leq t \Lambda$,} \\ 2 \frac{d^m}{1-d} & \text{if $m > t \Lambda$.}\end{array} \right. \end{equation}
The two constants $b$ and $d$ in this bound are explicitly known: $b = (\sqrt{5}-1)/2 \approx 0.618$ and $d = (\sqrt{5}-2) e^b \approx 0.438$. This explicit bound is also useful as a criterion for the choice of the degree $m$ in calculation of the matrix exponential with Chebyshev polynomials (cf. \cite{Bergamaschi2000}).
\end{example}

\section{Error estimates for RLS kernel predictors} \label{sec:errorcbl}

In the previous section we saw that, under some mild assumptions on the function $\phi$, the block Krylov iterates $p_{\phi,m-1}^{(\mathrm{kr})}(\Ll) \Ee_W$ approximate the kernel component $\phi(\Ll) \Ee_W$ as $m$ gets large. Now, if we consider the single columns of $\phi(\Ll) \Ee_W$ as basis vectors of a kernel machine, we see that also linear combinations of vectors in $\phi(\Ll) \Ee_W$ are approximated by respective linear combinations of elements of $p_{\phi,m-1}^{(\mathrm{kr})}(\Ll) \Ee_W$. In the light of the RLS kernel predictors introduced in Section \ref{sec:learningwithkernels}, we can therefore also consider the approximate RLS predictors 
\[ y^{(\mathrm{kr})} = \sum_{i=1}^N c^{(\mathrm{kr})}_i p_{\phi,m-1}^{(\mathrm{kr})}(\Ll) \Ee_W,\]
based on the coefficients $c^{(\mathrm{kr})}_i$ determined as solutions of the linear system
\begin{equation} \label{eq:computationcoefficientskrylov} 
\Big( \Ee_W^* p_{\phi,m-1}^{(\mathrm{kr})}(\Ll) \Ee_W + \gamma N \mathbf{I}_N\Big) 
\underbrace{\begin{bmatrix} c_1^{(\mathrm{kr})} \\ c_2^{(\mathrm{kr})} \\ \vdots \\ c_N^{(\mathrm{kr})} \end{bmatrix}}_{\mathrm{c}^{(\mathrm{kr})}}
= \underbrace{\begin{bmatrix} y_1 \\ y_2 \\ \vdots \\ y_N \end{bmatrix}}_{\mathrm{y}}.
\end{equation}
For this, we can expect convergence of the Krylov predictor $y^{(\mathrm{kr})}$ towards the original predictor $y$ as $m$ gets large. This is specified in the following theorem. 

\begin{theorem} \label{thm:errorestimates2}
Let $\mathrm{kr} \in \{\mathrm{cbl}, \mathrm{gbl}, \mathrm{sbl}, \mathrm{cheb}, \mathrm{cheb}^2\}$ and $\phi$ be continuous and positive on $[0,\Lambda]$ with 
$$\phi_{\min} = \min_{\lambda \in [0,\Lambda]} |\phi(\lambda)| \quad \text{and} \quad \phi_{\max} = \max_{\lambda \in [0,\Lambda]} |\phi(\lambda)|.$$ We further suppose that
$p_{\phi,m-1}^{(\mathrm{kr})}(\Ll) \Ee_W$ converges to $\phi(\Ll) \Ee_W$ as $m \to \infty$ and that $\gamma \geq 0$.
Then, for $m \to \infty$, we get the asymptotic bound
\[ \|y - y^{(\mathrm{kr})}\|_2 \dot{\leq} 
\frac{\|\mathrm{y}\|_2}{\phi_{\min} + \gamma N} \left(1 + \frac{\phi_{\max}}{\phi_{\min} + \gamma N}\right) \left\| \phi(\Ll) \Ee_W - p_{\phi,m-1}^{(\mathrm{kr})}(\Ll) \Ee_W \right\|_2.  \]
\end{theorem}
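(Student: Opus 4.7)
The plan is to write the difference $y - y^{(\mathrm{kr})}$ as the action of two matrix products on the label vector and split it via the standard perturbation identity. Set
\[ \Bb = \phi(\Ll)\Ee_W, \quad \Bb_m = p_{\phi,m-1}^{(\mathrm{kr})}(\Ll)\Ee_W, \quad \Aa = \Ee_W^*\Bb + \gamma N \mathbf{I}_N, \quad \Aa_m = \Ee_W^*\Bb_m + \gamma N \mathbf{I}_N, \]
so that $y = \Bb \Aa^{-1} \mathrm{y}$ and $y^{(\mathrm{kr})} = \Bb_m \Aa_m^{-1} \mathrm{y}$. Then add and subtract $\Bb_m\Aa^{-1}\mathrm{y}$ to get
\[ y - y^{(\mathrm{kr})} = (\Bb - \Bb_m)\Aa^{-1}\mathrm{y} + \Bb_m(\Aa^{-1}-\Aa_m^{-1})\mathrm{y} = (\Bb-\Bb_m)\Aa^{-1}\mathrm{y} + \Bb_m \Aa^{-1}(\Aa_m - \Aa)\Aa_m^{-1}\mathrm{y}. \]
Applying the spectral norm, the triangle inequality and submultiplicativity reduces the task to bounding $\|\Aa^{-1}\|_2$, $\|\Aa_m^{-1}\|_2$, $\|\Bb_m\|_2$ and $\|\Aa_m - \Aa\|_2$.

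Next I would collect the four ingredient bounds. Since $\phi(\Ll) \succeq \phi_{\min}\mathbf{I}_n$ and the columns of $\Ee_W$ are orthonormal, $\Ee_W^*\Ee_W = \mathbf{I}_N$, so $\Ee_W^*\phi(\Ll)\Ee_W \succeq \phi_{\min}\mathbf{I}_N$ and hence $\|\Aa^{-1}\|_2 \leq (\phi_{\min}+\gamma N)^{-1}$. The same reasoning gives $\|\Bb\|_2 \leq \|\phi(\Ll)\|_2\|\Ee_W\|_2 \leq \phi_{\max}$. The perturbation in the collocation matrix is controlled by
\[ \|\Aa_m-\Aa\|_2 = \|\Ee_W^*(\Bb_m - \Bb)\|_2 \leq \|\Bb_m-\Bb\|_2, \]
again because $\|\Ee_W\|_2 = 1$. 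Finally, under the standing assumption $\Bb_m \to \Bb$ as $m\to\infty$, continuity of inversion on the positive definite cone (which $\Aa$ belongs to) gives $\|\Aa_m^{-1}\|_2 \to \|\Aa^{-1}\|_2$ and $\|\Bb_m\|_2 \to \|\Bb\|_2 \leq \phi_{\max}$, so both quantities are at most $(\phi_{\min}+\gamma N)^{-1}$ and $\phi_{\max}$ in the limit $m\to\infty$.

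Combining the splitting with the four bounds yields
\[ \|y - y^{(\mathrm{kr})}\|_2 \;\dot{\leq}\; \frac{\|\mathrm{y}\|_2}{\phi_{\min}+\gamma N}\|\Bb-\Bb_m\|_2 + \frac{\phi_{\max}\|\mathrm{y}\|_2}{(\phi_{\min}+\gamma N)^2}\|\Bb-\Bb_m\|_2, \]
which is exactly the claimed bound after factoring out $\|\mathrm{y}\|_2/(\phi_{\min}+\gamma N)$.

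The subtle point is the asymptotic nature of the estimate. The bound $\|\Aa_m^{-1}\|_2 \leq (\phi_{\min}+\gamma N)^{-1}$ need not hold for small $m$: for $\mathrm{kr} \in \{\mathrm{gbl},\mathrm{sbl},\mathrm{cheb}\}$ the matrix $\Ee_W^* p_{\phi,m-1}^{(\mathrm{kr})}(\Ll)\Ee_W$ may even be indefinite (cf.\ the discussion after Algorithm~\ref{algorithm-global-block-Lanczos} and Section~\ref{sec:chebyshev}), so $\Aa_m$ need not lie in the positive definite cone for $\gamma=0$. The justification for passing to the limit is therefore the only delicate step: one uses the convergence hypothesis $\Bb_m\to\Bb$ together with the identity $\Aa_m - \Aa = \Ee_W^*(\Bb_m-\Bb)$ to deduce $\Aa_m\to\Aa$ in operator norm, from which continuity of the inverse at the positive definite matrix $\Aa$ (valid for any $\gamma\geq 0$ because $\phi_{\min}>0$) transports the spectral bounds from $\Aa$ to $\Aa_m$ up to a vanishing perturbation. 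Everything else is bookkeeping.
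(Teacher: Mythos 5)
Your proof is correct and follows essentially the same route as the paper: add and subtract a cross term, apply the triangle inequality, and control the difference of the regularized collocation inverses by a first-order perturbation bound, with the spectral estimates $\|(\Ee_W^*\phi(\Ll)\Ee_W+\gamma N\mathbf{I}_N)^{-1}\|_2\leq(\phi_{\min}+\gamma N)^{-1}$ and $\|\phi(\Ll)\Ee_W\|_2\leq\phi_{\max}$ supplying the constants. The only cosmetic differences are which cross term you subtract (the paper uses $\phi(\Ll)\Ee_W\Aa_m^{-1}\mathrm{y}$, you use $\Bb_m\Aa^{-1}\mathrm{y}$) and your use of the exact resolvent identity in place of the paper's asymptotic expansion; your explicit remark on why $\Aa_m^{-1}$ only exists for large $m$ when $\mathrm{kr}\in\{\mathrm{gbl},\mathrm{sbl},\mathrm{cheb}\}$ is a welcome clarification the paper leaves implicit.
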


\begin{proof}
For an invertible matrix $\Aa \in \Rr^{n \times n}$ we have the upper bound
\begin{equation} \label{eq:1} \| \Aa^{-1} - (\Aa + \Bb)^{-1}\|_2 \leq \|\Bb\|_2 \|\Aa^{-1}\|_2^2 + \mathcal{O}(\|\Bb\|_2^2), 
\end{equation}
for small enough perturbation matrices $\Bb \in \Rr^{n \times n}$. Writing the predictors $y^{(\mathrm{kr})}$ in terms of the solution of the linear system \eqref{eq:computationcoefficientskrylov} and using the triangle inequality, we get
\begin{align*}
\|y& - y^{(\mathrm{kr})}\|_2 = \| \phi(\Ll) \Ee_W (\Ee_W^* \phi(\Ll) \Ee_W + \gamma N \mathbf{I}_N)^{-1} \mathrm{y} \\ & \qquad \qquad \qquad -  p_{\phi,m-1}^{(\mathrm{kr})}(\Ll) \Ee_W (\Ee_W^* p_{\phi,m-1}^{(\mathrm{kr})}(\Ll) \Ee_W + \gamma N \mathbf{I}_N)^{-1} \mathrm{y} \|_2 \\
& \leq \| \phi(\Ll) \Ee_W ((\Ee_W^* \phi(\Ll) \Ee_W + \gamma N \mathbf{I}_N)^{-1}-(\Ee_W^* p_{\phi,m-1}^{(\mathrm{kr})}(\Ll) \Ee_W + \gamma N \mathbf{I}_N)^{-1}) \mathrm{y}\|_2  \\ & \qquad + \|(p_{\phi,m-1}^{(\mathrm{kr})}(\Ll) -  \phi(\Ll)) \Ee_W (\Ee_W^* p_{\phi,m-1}^{(\mathrm{kr})}(\Ll) \Ee_W + \gamma N \mathbf{I}_N)^{-1} \mathrm{y}  \|_2.
\end{align*} 
For the first term of this inequality, the bound \eqref{eq:1} implies asymptotically for large $m$ the upper estimate
\begin{align*}
\| \phi(\Ll)& \Ee_W ((\Ee_W^* \phi(\Ll) \Ee_W + \gamma N \mathbf{I}_N)^{-1}-(\Ee_W^* p_{\phi,m-1}^{(\mathrm{kr})}(\Ll) \Ee_W + \gamma N \mathbf{I}_N)^{-1}) \mathrm{y} \|_2 \\ 
& \dot{\leq}  \| \phi(\Ll) \Ee_W\|_2 \|(\Ee_W^* (\phi(\Ll) - p_{\phi,m-1}^{(\mathrm{kr})}(\Ll)) \Ee_W  \|_2\|(\Ee_W^* \phi(\Ll) \Ee_W + \gamma N \mathbf{I}_N)^{-1}\|_2^2\| \mathrm{y} \|_2 \\
&\dot{\leq} \frac{\phi_{\max} \|\mathrm{y}\|_2}{(\phi_{\min}+\gamma N)^2} \left\| \phi(\Ll) \Ee_W - p_{\phi,m-1}^{(\mathrm{kr})}(\Ll) \Ee_W \right\|_2
\end{align*} 
For the second term, we expand the term $\Ee_W^* p_{\phi,m-1}^{(\mathrm{kr})}(\Ll) \Ee_W$ as $\Ee_W^* p_{\phi,m-1}^{(\mathrm{kr})}(\Ll) \Ee_W = \Ee_W^* (p_{\phi,m-1}^{(\mathrm{kr})}(\Ll) -\phi(\Ll)) \Ee_W  +\Ee_W^*\phi(\Ll) \Ee_W$. Then, we apply the triangle inequality and again the bound given in \eqref{eq:1}. In this way, we get asymptotically for large $m$ the estimate
\begin{align*}
\|(p_{\phi,m-1}^{(\mathrm{kr})}(\Ll)& -  \phi(\Ll)) \Ee_W (\Ee_W^* p_{\phi,m-1}^{(\mathrm{kr})}(\Ll) \Ee_W + \gamma N \mathbf{I}_N)^{-1} \mathrm{y}  \|_2 \\ 
&\dot{\leq} \frac{\|\mathrm{y}\|_2}{(\phi_{\min}+\gamma N)} \left\| \phi(\Ll) \Ee_W - p_{\phi,m-1}^{(\mathrm{kr})}(\Ll) \Ee_W \right\|_2.
\end{align*} 
\end{proof}

\begin{remark}
As $\| \phi(\Ll) \Ee_W - p_{\phi,m-1}^{(\mathrm{kr})}(\Ll) \Ee_W \|_2 \leq \| \phi(\Ll) \Ee_W - p_{\phi,m-1}^{(\mathrm{kr})}(\Ll) \Ee_W \|_F$, we can combine the estimates of Theorem \ref{thm:errorestimates2} and Theorem \ref{thm:errorestimates} to get an estimate of $\|y - y^{(\mathrm{kr})}\|_2$ in terms of the best polynomial error $E_m(\phi)$. In this way, we get for $\mathrm{kr} \in \{\mathrm{cbl}, \mathrm{gbl}, \mathrm{sbl}\}$ the error estimate
\[ \|y - y^{(\mathrm{kr})}\|_2 \dot{\leq} 
\frac{2 \sqrt{N} \|\mathrm{y}\|_2}{\phi_{\min} + \gamma N} \left(1 + \frac{\phi_{\max}}{\phi_{\min} + \gamma N}\right) E_{m-1}(\phi).  \]
\end{remark}

\section{Calculating kernel predictors with the classical block Lanczos method} \label{sec:calculationcbl}

The columns $\{q_1, \ldots, q_{m N}\}$ of the blocks $\mathrm{Q}_1, \ldots, \mathrm{Q}_m$ delivered by  the classical block Lanczos method form an orthonormal system. This is in general not the case for the global and the sequential block Lanczos method. Also in view of the calculation of the kernel predictors the classical block Lanczos method has some theoretic advantages compared to the other two block methods. Most importantly, for the classical block method we can guarantee that the linear system \eqref{eq:computationcoefficientskrylov} has a unique solution, i.e., that the RLS predictor $y^{(\mathrm{cbl})}$ is always uniquely determined. This is an immediate consequence of Theorem  \ref{thm:classicalpositivedefinite}.

\begin{corollary} \label{cor:1}
Let $\phi$ be a positive function on the interval $[0,\Lambda]$ which contains the spectrum of $\Ll$. Then, for every $m \geq 1$ and $\gamma \geq 0$ there exists a unique RLS kernel predictor $y^{(\mathrm{cbl})}$ defined upon \eqref{eq:computationcoefficientskrylov} by using the classical block Lanczos method for the generation of $p_{\phi,m-1}^{(\mathrm{cbl})}(\Ll) \Ee_W$. The predictor $y^{(\mathrm{cbl})}$ can be written as a linear combination of the vectors $\{q_1, \ldots, q_{m N}\}$ as 
\[ y^{(\mathrm{cbl})} = [\mathrm{Q}_1, \ldots, \mathrm{Q}_m] \phi(\mathbf{H}_m) \mathrm{F}_1 \mathrm{c}^{(\mathrm{cbl})}.\]
\end{corollary}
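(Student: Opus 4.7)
The plan is to derive Corollary \ref{cor:1} as a direct consequence of Theorem \ref{thm:classicalpositivedefinite} together with the explicit representation of $p_{\phi,m-1}^{(\mathrm{cbl})}(\Ll) \Ee_W$ produced by Algorithm \ref{algorithm-block-Lanczos}. Since all the structural work has already been done in those earlier results, this is essentially a bookkeeping argument and I do not expect any substantial obstacle; the only subtlety is correctly handling the case $\gamma = 0$.

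First I would address the existence and uniqueness of $y^{(\mathrm{cbl})}$. By Theorem \ref{thm:classicalpositivedefinite}, the matrix $\Ee_W^* p_{\phi,m-1}^{(\mathrm{cbl})}(\Ll) \Ee_W$ is symmetric and positive definite whenever $\phi$ is positive on an interval containing the spectrum of $\Ll$. Adding $\gamma N \mathbf{I}_N$ with $\gamma \geq 0$ preserves symmetry and strict positive definiteness (the eigenvalues are shifted by a non-negative constant and thus remain strictly positive). Consequently the coefficient matrix in \eqref{eq:computationcoefficientskrylov} is invertible for every $m \geq 1$ and every $\gamma \geq 0$, so the coefficient vector $\mathrm{c}^{(\mathrm{cbl})}$ is uniquely determined by the labels $\mathrm{y}$, and therefore so is the predictor $y^{(\mathrm{cbl})} = p_{\phi,m-1}^{(\mathrm{cbl})}(\Ll) \Ee_W \, \mathrm{c}^{(\mathrm{cbl})}$.

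Next I would establish the representation formula. Directly from Step~10 of Algorithm \ref{algorithm-block-Lanczos}, we have the identity
\[ p_{\phi,m-1}^{(\mathrm{cbl})}(\Ll) \Ee_W = [\mathrm{Q}_1, \ldots, \mathrm{Q}_m] \, \phi(\mathbf{H}_m) \, \mathrm{F}_1. \]
Multiplying both sides on the right by the coefficient vector $\mathrm{c}^{(\mathrm{cbl})}$ yields
\[ y^{(\mathrm{cbl})} = p_{\phi,m-1}^{(\mathrm{cbl})}(\Ll) \Ee_W \, \mathrm{c}^{(\mathrm{cbl})} = [\mathrm{Q}_1, \ldots, \mathrm{Q}_m] \, \phi(\mathbf{H}_m) \, \mathrm{F}_1 \, \mathrm{c}^{(\mathrm{cbl})}, \]
which is exactly the claimed representation. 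Since the columns of $[\mathrm{Q}_1, \ldots, \mathrm{Q}_m]$ are the orthonormal vectors $\{q_1, \ldots, q_{mN}\}$, this expresses $y^{(\mathrm{cbl})}$ as a linear combination of these basis vectors with the coefficients encoded in the vector $\phi(\mathbf{H}_m) \mathrm{F}_1 \mathrm{c}^{(\mathrm{cbl})} \in \Rr^{mN}$, completing the proof. No step here requires any estimate beyond what is already recorded in Theorem \ref{thm:classicalpositivedefinite} and the description of Algorithm \ref{algorithm-block-Lanczos}.
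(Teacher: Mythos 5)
Your proposal is correct and follows essentially the same route as the paper: uniqueness from the positive definiteness of $\Ee_W^* p_{\phi,m-1}^{(\mathrm{cbl})}(\Ll) \Ee_W$ established in Theorem \ref{thm:classicalpositivedefinite} (preserved under the shift by $\gamma N \mathbf{I}_N$), and the representation formula read off directly from the output of Algorithm \ref{algorithm-block-Lanczos}. Your version merely spells out the eigenvalue-shift argument a bit more explicitly than the paper does.
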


\begin{proof}
By Theorem \ref{thm:classicalpositivedefinite}, we know that the matrix $\Ee_W^* p_{\phi,m-1}^{(\mathrm{cbl})}(\Ll) \Ee_W$ is positive definite for all $m \geq 1$. This implies that the linear system \eqref{eq:computationcoefficientskrylov} has a unique solution for all $m \geq 1$ and $\gamma \geq 0$. Furthermore, the classical block Lanczos iterations as described in Algorithm \ref{algorithm-block-Lanczos} lead to the identity
\[y^{(\mathrm{cbl})} = p_{\phi,m-1}^{(\mathrm{cbl})}(\Ll) \Ee_W \mathrm{c}^{(\mathrm{cbl})} = [\mathrm{Q}_1, \ldots, \mathrm{Q}_m] \phi(\mathbf{H}_m) \mathrm{F}_1 \mathrm{c}^{(\mathrm{cbl})}.\] 
\end{proof}

While proving Theorem  \ref{thm:classicalpositivedefinite}, we have shown that $\Ee_W^* p_{\phi,m-1}^{(\mathrm{cbl})}(\Ll) \Ee_W = \mathrm{F}_1^*\phi(\mathbf{H}_m) \mathrm{F}_1$ holds true. This identity allows to calculate the coefficients $\mathrm{c}^{(\mathrm{cbl})}$ directly by using the matrix $\phi(\mathbf{H}_m)$. In particular, the explicit knowledge of the block $p_{\phi,m-1}^{(\mathrm{cbl})}(\Ll) \Ee_W$ is not required to calculate the predictor $y^{(\mathrm{cbl})}$. We summarize all steps for the calculation of $y^{(\mathrm{cbl})}$ in Algorithm \ref{algorithm-RLS-Krylov}. 

\begin{algorithm} 
\caption{Kernel RLS predictor using classical block Lanczos method}
\label{algorithm-RLS-Krylov}

\begin{algorithmic}[1]
\STATE \textbf{Input:} Labels $y_1, \ldots, y_N$ at the sampling nodes $W = \{\node{w}_{1}, \ldots, \node{w}_{N}\} \subset V$. \\
A graph Laplacian $\Ll$ and a positive function $\phi$ on $[0,\Lambda]$. \\[1mm]  
\STATE \textbf{Calculate} $\phi(\mathbf{H}_m) \mathrm{F}_1$ and the basis $[\mathrm{Q}_1, \ldots, \mathrm{Q}_m]$ of the classical block Krylov space $\krycl_m(\Ll,\Ee_W)$ by using Algorithm \ref{algorithm-block-Lanczos}. \\[1mm] 
\STATE 
\textbf{Solve} the linear system of equations 
\begin{equation*}  
\Big( \mathrm{F}_1^* \phi(\mathbf{H}_m) \mathrm{F}_1 + \gamma N \mathbf{I}_N\Big) 
\underbrace{\begin{bmatrix} c_1^{(\mathrm{cbl})} \\ c_2^{(\mathrm{cbl})} \\ \vdots \\ c_N^{(\mathrm{cbl})} \end{bmatrix}}_{\mathrm{c}^{(\mathrm{cbl})}}
= \begin{bmatrix} y_1 \\ y_2 \\ \vdots \\ y_N \end{bmatrix}.
\end{equation*}
\STATE
\textbf{Calculate} the RLS predictor $y^{(\mathrm{cbl})}$ as
\[ y^{(\mathrm{cbl})} = [\mathrm{Q}_1, \ldots, \mathrm{Q}_m] \phi(\mathbf{H}_m) \mathrm{F}_1 
\mathrm{c}^{(\mathrm{cbl})} .\]
\
\end{algorithmic}

\end{algorithm}

\section{Computational complexity and storage requirements} \label{sec:costs}

The five introduced block Krylov subspace methods have different costs in terms of computational complexity and storage requirements. Depending on the problem at hand, these differences can get relevant in practical applications. For this, we provide a brief comparison of these costs for the block Krylov methods $\mathrm{kr} \in \{\mathrm{cbl}, \mathrm{gbl}, \mathrm{sbl}, \mathrm{cheb}\}$. The squared Chebyshev method $\mathrm{cheb}^2$ can be considered as a restarted variant of the Chebyshev method and the respective costs of $\mathrm{cheb}^2$ are therefore, up to a constant factor, the same as for the Chebyshev approximation.  
   
In view of the computational expanses of the methods, we compare in Table \ref{table:computationalcomplexity}, similarly as proposed in \cite{Gutknecht2006}, the following operations: the required matrix-vector products (MVs) in $\Rr^n$; the inner products in $\Rr^n$ (DOTs); the necessary vector space operations (addition and multiplication with scalars) in $\Rr^n$ (AXPYs); the calculation of the matrix function $\phi(\mathbf{H}_m)$ for the three Lanczos schemes, and the calculation of the coefficients $c_k(\phi)$ in case of the Chebyshev method. We assume that the calculation of the matrix function $\phi(\mathbf{H}_m)$ is performed by a direct algorithm using the spectral decomposition of the symmetric matrix $\mathbf{H}_m$. For this, we will also take into account that $\mathbf{H}_m$ has a sparse banded structure.  

The most expensive operations in Table \ref{table:computationalcomplexity} are, for large graph sizes $n$, the MVs. For a fixed iteration number $m$, the number of MVs is the same for all four methods. In the last section of this article, we will experimentally see that the classical block Lanczos method achieves a higher accuracy for the same number $m$ of iterations. Although this higher accuracy favors the usage of classical block Lanczos methods, the other methods are much cheaper in terms of required DOTs, AXPYs and the calculation of the matrix function $\phi(\mathbf{H}_m)$. In particular, if for sparse matrices $\Ll$ the cost of the MVs is not too dominant, the classical block Lanczos method can be outperformed by the other methods in terms of computational complexity if the block size $N$ gets large. 

\begin{table}
\begin{tabular}{|l|c|c|c|c|}
\hline Operations & \textrm{cbl} & \textrm{gbl} & \textrm{sbl}& \textrm{cheb}\\
\hline MVs & $m N$ & $m N$ & $m N$ & $m N$\\
DOTs & $\mathcal{O}(m N^2)$ & $\mathcal{O}(m N)$ & $\mathcal{O}(m N)$ & - \\
AXPYs & $\mathcal{O}(m N^2)$ & $\mathcal{O}(m N)$ & $\mathcal{O}(m N)$ & $\mathcal{O}(m N)$\\
$\phi(\mathbf{H}_m) / c_k(\phi)$ & $\mathcal{O}(m N^3) + \mathcal{O}(m^2 N^2)$ & $\mathcal{O}(m^2)$ & $\mathcal{O}(m^2 N)$ & $\mathcal{O}(m \log m)$ \\
\hline
\end{tabular}

\vspace{2mm}

\caption{Required operations to calculate $p_{\phi,m-1}^{(\mathrm{kr})}(\Ll)\Ee_W$ for the Krylov space methods $\mathrm{kr} \in \{\mathrm{cbl},\mathrm{gbl},\mathrm{sbl},\mathrm{cheb}\}$.}
\label{table:computationalcomplexity}
\end{table}

The competitivity of the classical block Lanczos method gets further diminished by the storage requirements during the computational process. As listed in Table \ref{table:memory}, this high memory demand is due to the fact that the entire Krylov basis $[\mathrm{Q}_1, \ldots, \mathrm{Q}_m]$ has to be stored in order to calculate the final matrix function. Comparing the three Lanczos schemes, the sequential block Lanczos scheme performs best in terms of storage requirements. It has the advantage that the operations on the columns of the block $\Ee_W$ can be performed independently, so that only the Krylov basis for one column is required at a particular point in time of the calculation. Particularly cheap in terms of memory requirements is however the Chebyshev method. As the three-term recurrence relation of the Chebyshev polynomials requires only $T_k(\mathbf{I}_n - \ts \frac{2}{\Lambda}\Ll) e_{\node{w}}$ and $T_{k-1}(\mathbf{I}_n - \ts \frac{2}{\Lambda}\Ll)e_{\node{w}}$ to calculate the next iterate $T_{k+1}(\mathbf{I}_n - \ts \frac{2}{\Lambda}\Ll) e_{\node{w}}$, only $2$ vectors have to be stored at a time for the calculation of the next basis vector. Also, the Chebyshev method can be applied sequentially so that only single vectors have to be stored and not the entire block.  

\begin{table}
\begin{tabular}{|l|c|c|c|c|}
\hline Storage & \textrm{cbl} & \textrm{gbl} & \textrm{sbl}& \textrm{cheb}\\
\hline $\mathrm{Q}_k/ T_k(\Ll) \Ee_W$ & $m n N$ & $m n N$ & $m n$ & $2n$ \\
$\mathbf{H}_m / c_k(\phi)$ & $\mathcal{O}(m N^2)$ & $\mathcal{O}(m)$ & $\mathcal{O}(m)$ & $m$ \\
\hline
\end{tabular}

\vspace{2mm}

\caption{Memory requirements for the calculation of the matrix polynomial $p_{\phi,m-1}^{(\mathrm{kr})}(\Ll)\Ee_W$ for the Krylov space methods $\mathrm{kr} \in \{\mathrm{cbl},\mathrm{gbl},\mathrm{sbl},\mathrm{cheb}\}$.}
\label{table:memory}

\end{table}

\begin{figure}[htbp]
	\centering Lanczos approximation \\[1mm]
	\includegraphics[width= \textwidth]{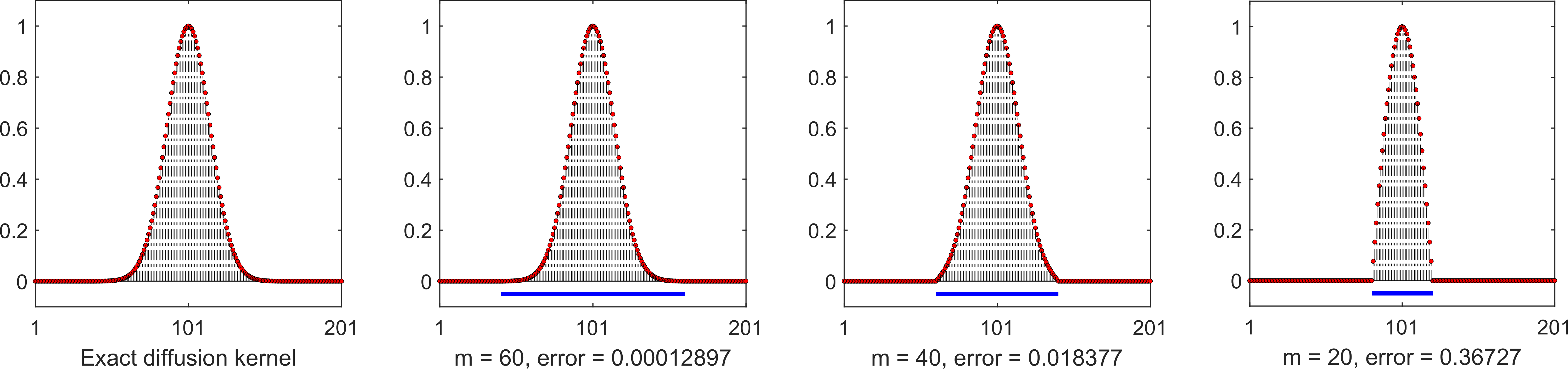} \\[1mm]
    \includegraphics[width= \textwidth]{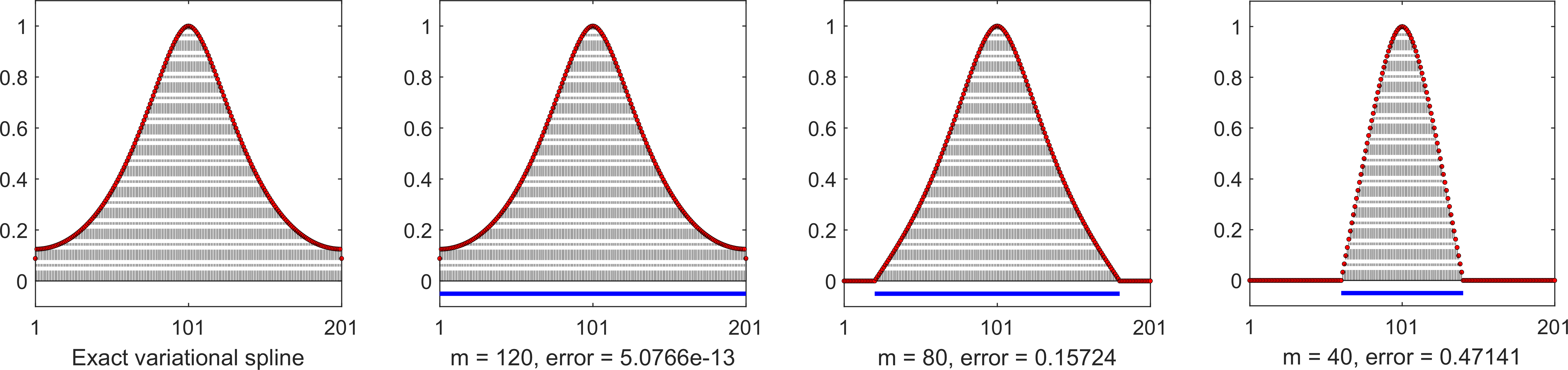} \\[2mm]
    Chebyshev approximation \\[1mm]
	\includegraphics[width= \textwidth]{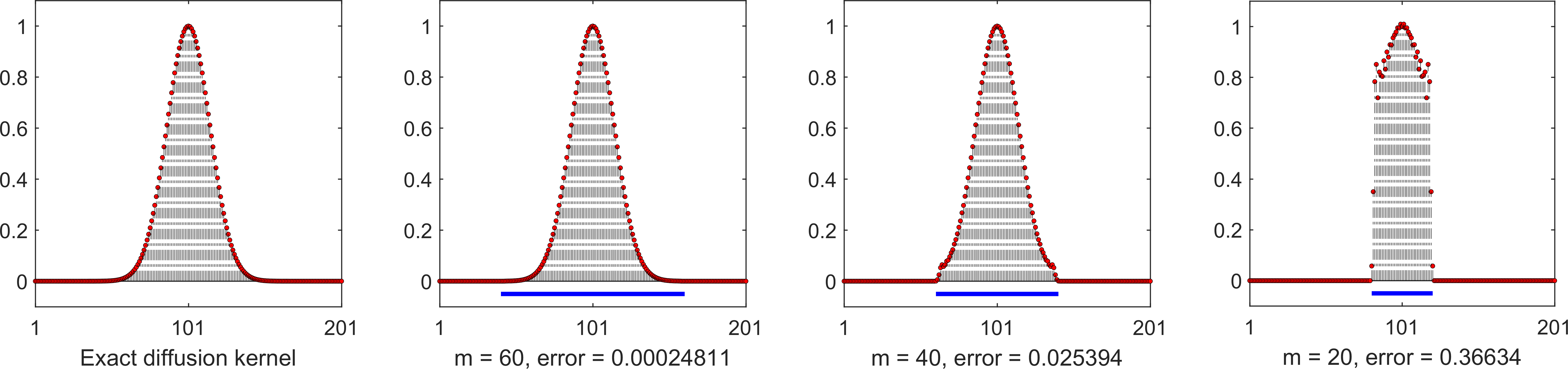} \\[1mm]
	\includegraphics[width= \textwidth]{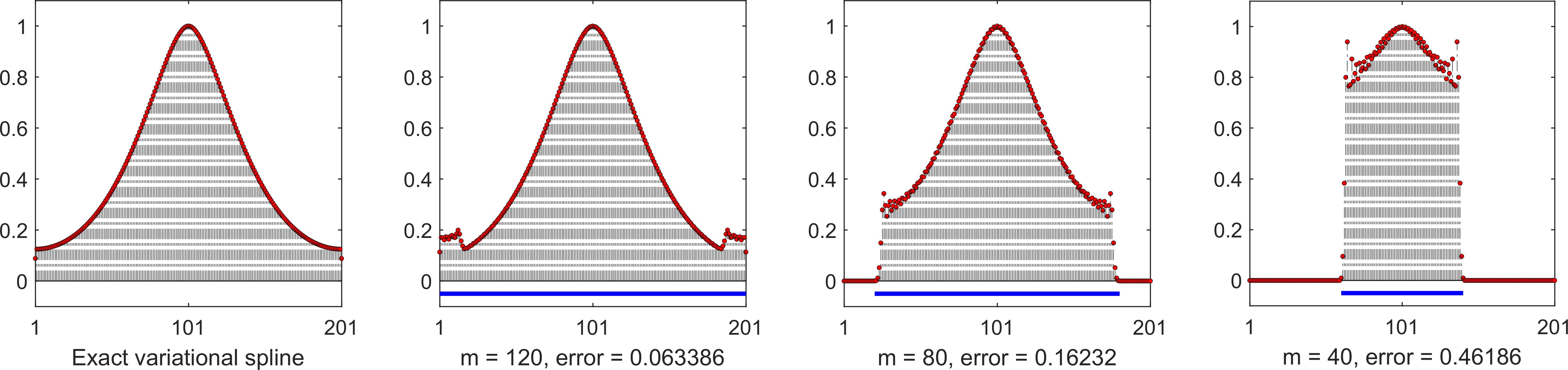} 
	\caption{Comparison between Lanczos and Chebyshev Krylov subspace approximation of a diffusion and a variation spline kernel on the path graph $G_1$. The blue line indicates the support of the approximant, the error with respect to the exact kernel column is measured in the uniform norm. }
	\label{fig:pathkernel}
\end{figure}

\section{Experiments} \label{sec:experiments}

For a concrete comparison of the five block Krylov methods for the approximation of the kernels and the respective kernel predictors, we conduct a series of experiments on a few simple data sets. The code used to conduct these experiments can be found in the freely available Github repository 

\begin{center}\url{https://github.com/WolfgangErb/GBFlearn}.\end{center} 

\subsection{Comparison between Lanczos and Chebyshev methods}

To visualize the differences in the convergence between the Lanczos method and the Chebyshev method for the approximation of $\phi(\Ll) \Ee_{W}$, we start with a simple path graph $G_1$ consisting of $201$ nodes and $200$ edges such that each edge connects two consecutive nodes. For simplicity, we start with only one central sampling node $W = \{\node{v}_{101}\}$. As kernels on $G_1$ we consider the diffusion kernel $\phi(\Ll) = e^{- t \Ll}$ with $t = 200$ as well as the variational spline kernel $\phi(\Ll) = (\Ll + \varepsilon)^{-s}$ with $\epsilon = 0.001$ and $s = 2$. In this example, $\Ll$ denotes the normalized graph Laplacian on $G_1$. Goal of our test is to see how fast a Lanczos method converges towards $\phi(\Ll) e_{\node{v}_{101}}$ compared to the approximation given by the Chebyshev polynomial. As the block size $N$ is equal to $1$, all Lanczos methods $\mathrm{kr} \in \{\mathrm{cbl}, \mathrm{gbl}, \mathrm{sbl}\}$ will provide the same approximant. The comparison to the Chebyshev approximation for different iteration numbers $m$ is visualized in Fig. \ref{fig:pathkernel}. It is visible that for fixed numbers $m$ the more adaptive Lanczos method provides smaller uniform errors for the kernel column $\phi(\Ll) e_{\node{v}_{101}}$ than the Chebyshev method.

\subsection{Convergence of block Krylov methods}

\begin{figure}[htbp]
	\centering
	\includegraphics[width= 0.6\textwidth]{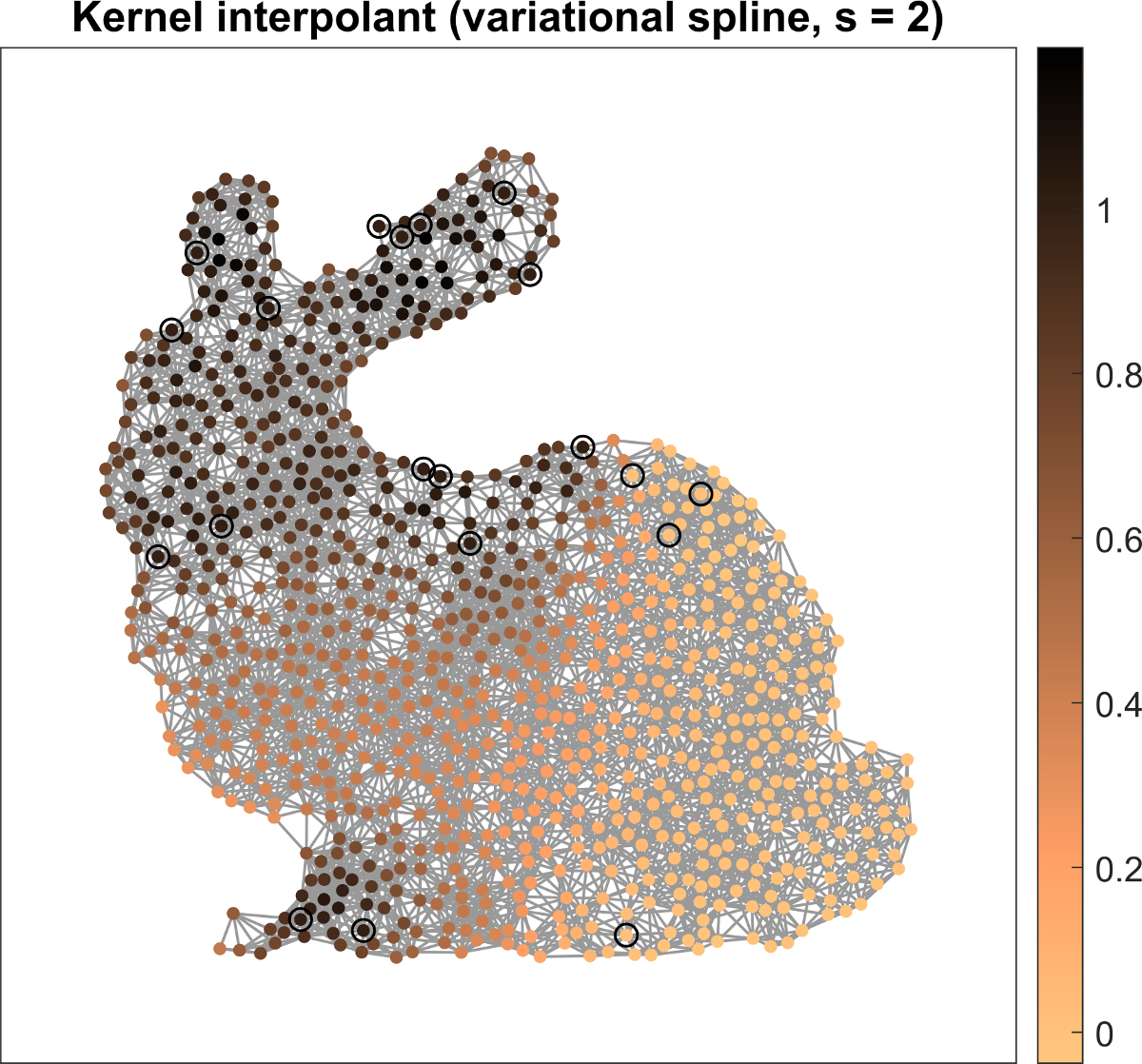}
	\caption{Kernel interpolant on the bunny graph using $N = 20$ samples and the variational spline kernel with parameters $s = 2$ and $\epsilon = 0.05$.}
	\label{fig:bunnykernel}
\end{figure}

In this section, we numerically evaluate how fast the approximate predictors $y^{(\mathrm{kr})}$ converge towards the kernel predictor $y$ for the five considered block Krylov methods $\mathrm{kr} \in \{\mathrm{cbl}, \mathrm{gbl}, \mathrm{sbl}, \mathrm{cheb}, \mathrm{cheb}^2\}$. This is relevant from a practical point of view since the iteration number $m$ of the block Krylov methods determines how many matrix-vector products are necessary during the calculations. As discussed in Section \ref{sec:costs}, the MVs are typically the most expensive ones if the size $n$ of the graph gets large.  

To compare the rate of convergence for the different block Krylov schemes, we use as a second test graph $G_2$ a reduced 2D projection of the Stanford bunny data set (Source: Stanford University Computer Graphics Laboratory). This data set consists of $n = 900$ points in the plane. To generate the graph, we connect two points of this set with an edge if the Euclidean distance between the points is smaller than a given radius $0.01$. This generates the graph $G_2$ with a total number of $7325$ unweigthed edges. As graph Laplacian $\Ll$ on $G_2$, we use the normalized graph Laplacian. As kernels we use again the diffusion kernel 
$\phi(\Ll) = e^{- t \Ll}$ with $t = 20$ and the variational spline kernel $\phi(\Ll) = (\Ll + \varepsilon)^{-s}$ with $\epsilon = 0.05$ and $s = 2$. We then calculate the kernel interpolants $y$ (this corresponds to a kernel predictor with $\gamma = 0$) based on $N = 20$ selected sampling nodes and binary labels $y_1, \ldots, y_N \in \{0,1\}$. The size of the graph is still moderate, so it is possible to calculate the kernel interpolant exactly without the usage of iterative methods. The exact interpolant based on the variational spline as kernel is plotted in Fig. \ref{fig:bunnykernel}.

\begin{figure}[htbp]
	\centering
	\includegraphics[width= 0.48\textwidth]{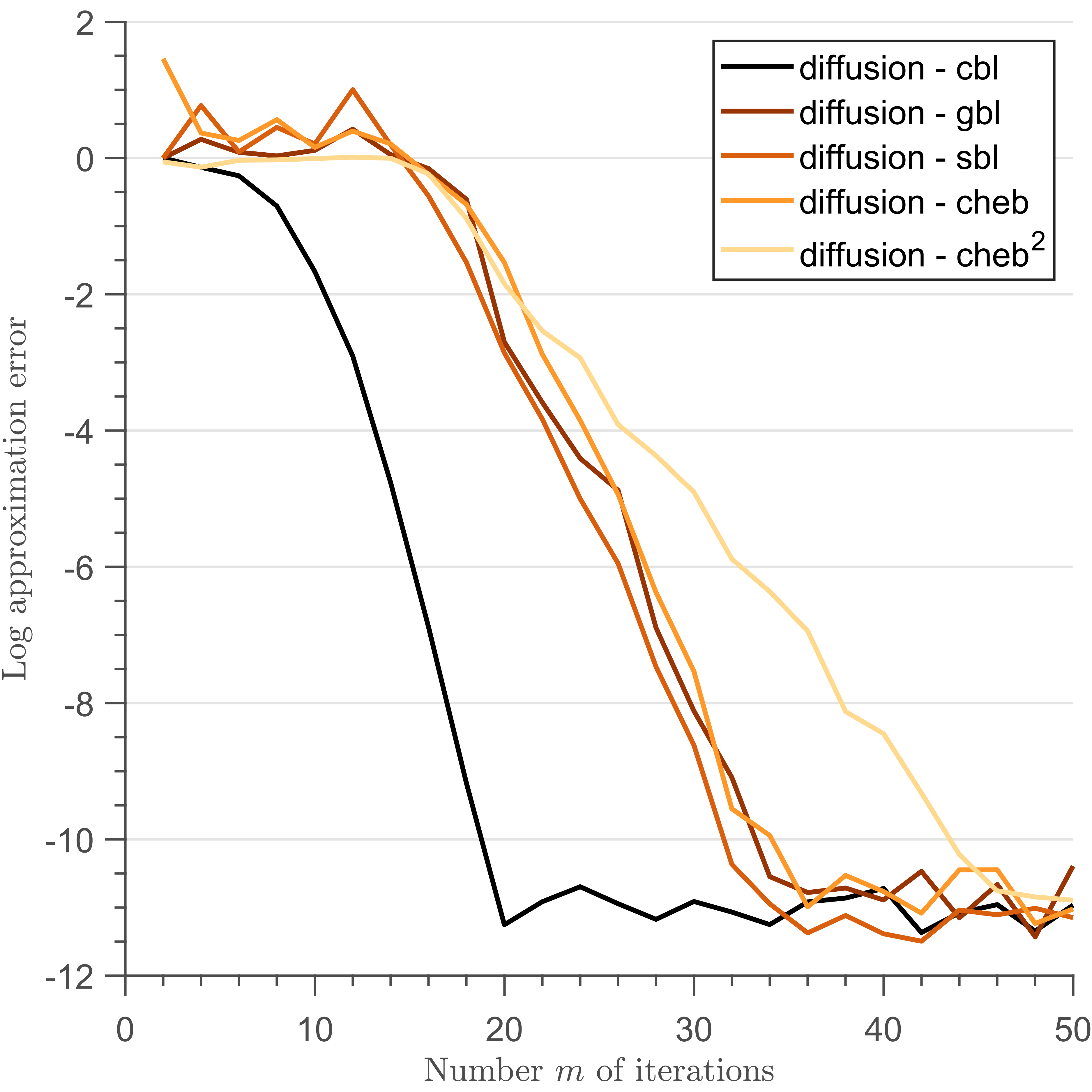} \quad
	\includegraphics[width= 0.48\textwidth]{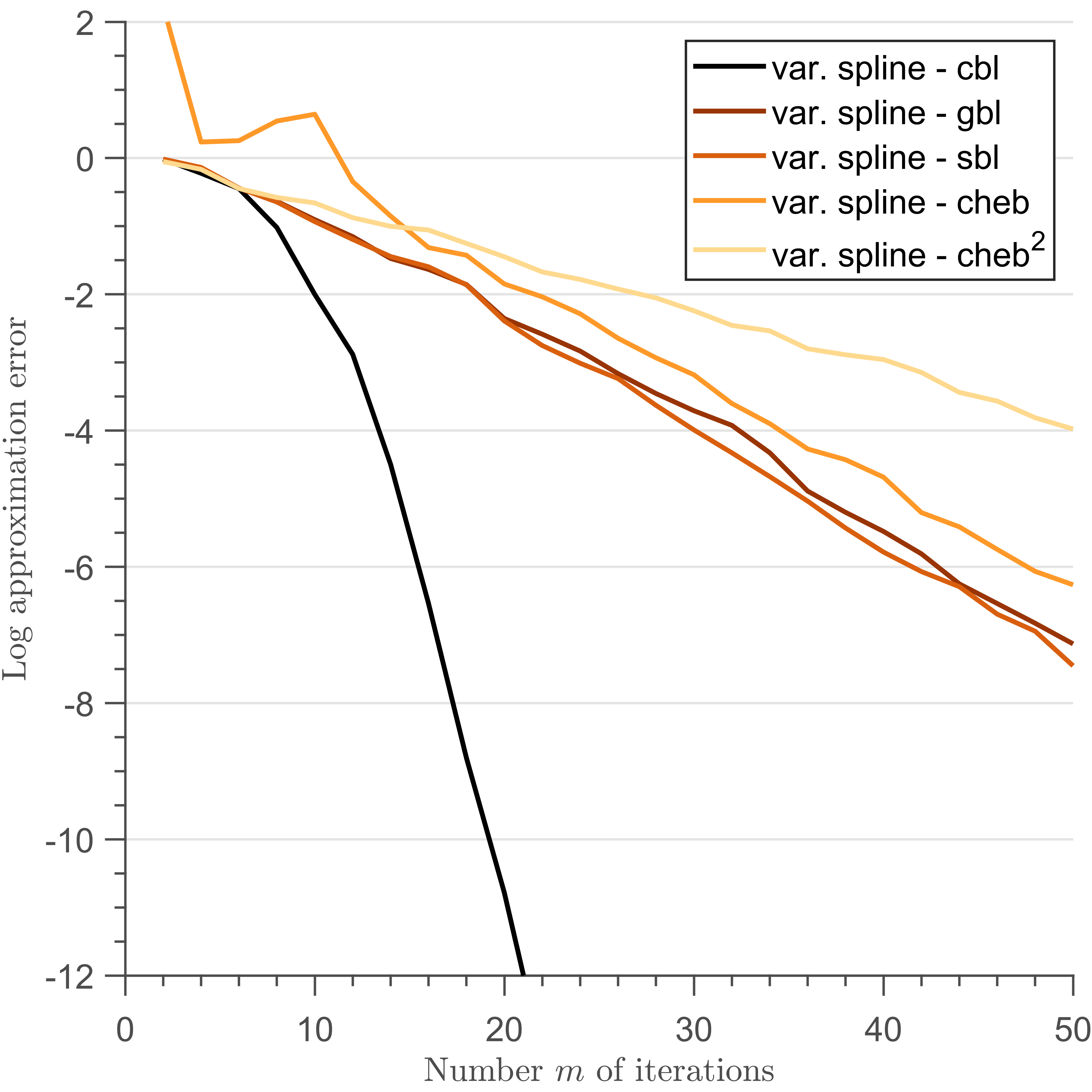}
	\caption{Uniform error $\|y - y^{(\mathrm{kr})}\|_\infty$ for the five block Krylov methods $\mathrm{kr} \in \{\mathrm{cbl}, \mathrm{gbl}, \mathrm{sbl}, \mathrm{cheb}, \mathrm{cheb}^2\}$ in terms of the iteration numbers $m$.}
	\label{fig:bunnyconvergence}
\end{figure}

The uniform error $\|y - y^{(\mathrm{kr})}\|_{\infty}$ for the approximation of the predictor $y$ with the five block Krylov methods $\mathrm{kr} \in \{\mathrm{cbl}, \mathrm{gbl}, \mathrm{sbl}, \mathrm{cheb}, \mathrm{cheb}^2\}$ is plotted in Fig. \ref{fig:bunnyconvergence}. It is visible that the classical block Lanczos method requires considerably less iterations $m$ for convergence compared to the other methods, followed by the global and the sequential block Lanczos method. The two Chebyshev methods display a slightly slower convergence in terms of the number $m$ of iterations. While this seems to be an indication to use the classical block Lanczos scheme in practice, we have already seen in Section \ref{sec:costs} that this method has a considerably higher memory demand and it requires larger computational times for the algorithmic operations aside the MVs. For this, the usage of the classical block Lanczos method can only be recommended in those cases in which the MVs form the dominant part of the computational expanses. 

\begin{figure}[htbp]
	\centering
	\includegraphics[width= 0.6\textwidth]{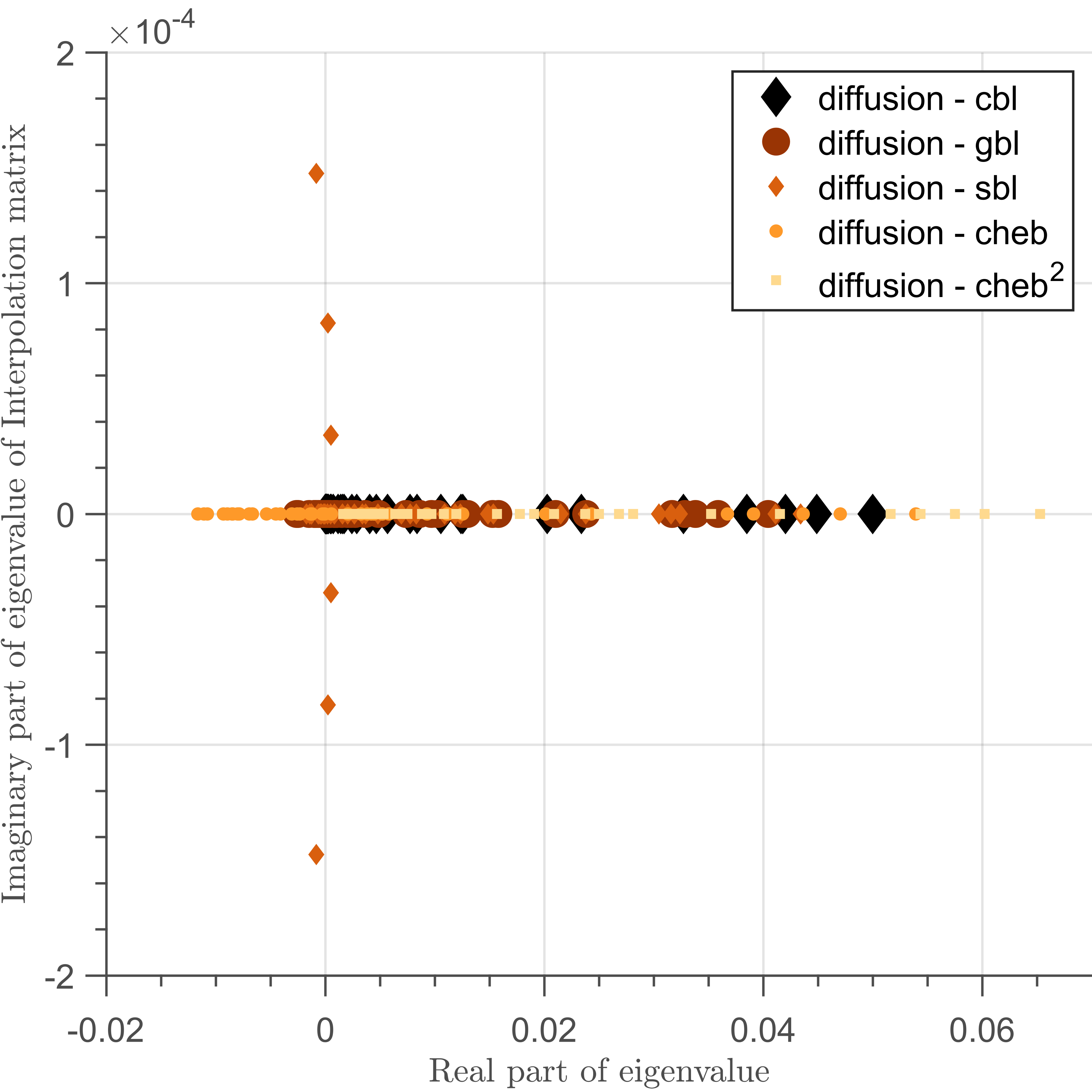}
	\caption{Eigenvalues of $\Ee_W^* p_{\phi,5}^{(\mathrm{kr})}(\Ll) \Ee_W$ for the five methods $\mathrm{kr} \in \{\mathrm{cbl}, \mathrm{gbl}, \mathrm{sbl}, \mathrm{cheb}, \mathrm{cheb}^2\}$, with kernel $\phi(\Ll) = e^{-t \Ll}$, $t = 20$. }
	\label{fig:bunnyeigenvalues}
\end{figure}

\subsection{Eigenvalues of the collocation matrices}

We have seen in Theorem \ref{thm:classicalpositivedefinite} and Corollary \ref{cor:1} that the collocation matrices $\Ee_W^* p_{\phi,m-1}^{(\mathrm{cbl})}(\Ll) \Ee_W$ used for the computation of the predictor $y^{(\mathrm{cbl})}$ are positive definite and that the uniqueness of $y^{(\mathrm{cbl})}$ is always guaranteed. Also for the squared Chebyshev polynomials the positive semi-definiteness of the matrix $\Ee_W^* p_{\phi,m-1}^{(\mathrm{cheb}^2)}(\Ll) \Ee_W$ is ensured. For the other three block Krylov methods $\mathrm{kr} \in \{\mathrm{gbl}, \mathrm{sbl}, \mathrm{cheb}\}$ the positive definiteness is in general not given. We illustrate this with a numerical counterexample. For the bunny graph $G_2$ introduced in the previous section, we pick $N = 40$ sampling nodes, the diffusion kernel $\phi(\Ll) = e^{-t \Ll}$, $t = 20$, and calculate the eigenvalues of the matrices $\Ee_W^* p_{\phi,m-1}^{(\mathrm{kr})}(\Ll) \Ee_W$, selecting $m = 6$. These eigenvalues are plotted in Fig. \ref{fig:bunnyeigenvalues}. Corresponding to the theoretic results, the eigenvalues for the methods $\mathrm{kr} \in \{\mathrm{cbl}, \mathrm{cheb}^2\}$ turn out to be positive. On the other hand,  for $\mathrm{kr} \in \{\mathrm{gbl}, \mathrm{cheb}\}$ we get also negative eigenvalues and for $\mathrm{kr} = \mathrm{sbl}$ even complex-valued numbers. This implies that for small iteration numbers $m$ the matrix $\Ee_W^* p_{\phi,m-1}^{(\mathrm{kr})}(\Ll) \Ee_W + N \gamma \mathbf{I}_N$ is not necessarily invertible in case of the Krylov methods $\mathrm{kr} \in \{\mathrm{gbl}, \mathrm{sbl}, \mathrm{cheb}\}$. On the other hand, for increasing $m$ the matrices $\Ee_W^* p_{\phi,m-1}^{(\mathrm{kr})}(\Ll) \Ee_W$ converge towards $\Ee_W^* \phi(\Ll) \Ee_W$. This makes sure that for large enough $m$ the uniqueness of the predictor $y^{(\mathrm{kr})}$ is given for all five block Krylov methods as soon as $\phi$ is positive on $[0,\Lambda]$.   

\section{Conclusion}
In this article, we have investigated and compared five block Krylov subspace methods for the iterative calculation of kernel matrices and kernel predictors on graphs: three Lanczos-type methods and two Chebyshev methods. From a theoretical point of view the classical block Lanczos method has some important advantages compared to the other four block Krylov methods: it guarantees the uniqueness of the kernel predictor, the calculation of the predictor can be performed without the explicit knowledge of the kernel matrix and it displays a much faster convergence in terms of the number of matrix-vector products. On the other hand, the classical block Lanczos method has a considerably larger cost in terms of memory and a larger computational complexity beyond the matrix-vector products. For this, in practical calculations the global and the sequential block Lanczos methods can outperform the classical block Lanczos iteration if the costs of the matrix-vector products are not too dominant. Although the two considered Chebyshev methods usually require more iterations for convergence they are valuable alternatives to the block Lanczos methods in case of limited memory.    

\section*{Acknowledgment}

The author acknowledges support by GNCS-IN$\delta$AM, the Rete ITaliana di Approssimazione (RITA) and the thematic group on Approximation Theory and Applications of the Italian Mathematical Union.


\begin{thebibliography}{1}

\bibitem{BelkinMatveevaNiyogi2004}
\newblock M. Belkin, T. Matveeva, and P. Niyogi, 
\newblock \doititle{Regularization and Semi-supervised Learning on Large Graphs},
\newblock \emph{Shawe-Taylor, J., Singer, Y. (Eds.): Learning Theory, COLT 2004, LNAI 3120}, Springer, Berlin, Heidelberg 2004, 624-638.

\bibitem{Bergamaschi2000}
\newblock L. Bergamaschi and M. Vianello,
\newblock \doititle{Efficient computation of the exponential operator for
large, sparse, symmetric matrices},
\newblock \emph{Numer. Linear Algebra Appl.}, \textbf{7} (2000), 27-45.

\bibitem{Cavoretto2021}
\newblock R. Cavoretto, A. De Rossi, and W. Erb.
\newblock \doititle{Partition of unity methods for signal processing on graphs},
\newblock \emph{J. Fourier Anal. Appl.}, \textbf{27} (2021), 66.

\bibitem{Cavoretto2022}
\newblock R. Cavoretto, A. De Rossi, and W. Erb.
\newblock \doititle{GBFPUM - A MATLAB Package for Partition of Unity Based Signal Interpolation and Approximation on Graphs},
\newblock \emph{J. Fourier Anal. Appl.}, \textbf{15} (2022), 25-34.

\bibitem{Coifman2006}
\newblock R. Coifman and M. Maggioni.
\newblock \doititle{Diffusion wavelets},
\newblock \emph{Appl. Comput. Harmonic Anal.}, \textbf{21} (2006), 53-94.

\bibitem{Cuomo2023}
\newblock S. Cuomo, W. Erb, and G. Santin,
\newblock \doititle{Kernel-Based Models for Influence Maximization on Graphs based on Gaussian Process Variance Minimization},
\newblock \emph{J. Comput. Appl. Math.}, \textbf{423} (2023), 114951.

\bibitem{DeVore1993}
\newblock R. A. DeVore and G. G. Lorentz,
\newblock \emph{Constructive Approximation},
\newblock {Springer-Verlag}, Berlin, Heidelberg, 1993.

\bibitem{elsworthguettel2020}
\newblock S. Elsworth and S. Güttel,
\newblock \doititle{The block rational Arnoldi method},
\newblock \emph{SIAM Journal on Matrix Analysis and Applications}, \textbf{41} (2020), 365-388.

\bibitem{erb2022}
\newblock W. Erb,
\newblock \doititle{Graph signal interpolation with positive definite graph basis functions},
\newblock \emph{Appl. Comput. Harmon. Anal.}, \textbf{60} (2022), 368-395.

\bibitem{erb2020}
\newblock W. Erb,
\newblock \doititle{Semi-supervised learning on graphs with feature-augmented graph basis functions},
\newblock preprint, 2020, \arXiv{2003.07646}.


\bibitem{erb2023}
\newblock W. Erb,
\newblock \doititle{Graph Wedgelets: Adaptive Data Compression on Graphs based on Binary Wedge Partitioning Trees and Geometric Wavelets},
\newblock \emph{IEEE Transactions on Signal and Information Processing over Networks}, accepted for publication (2023).

\bibitem{Frommer2017}
\newblock A. Frommer, K. Lund, and D. B. Szyld,
\newblock \doititle{Block Krylov subspace methods for functions of
matrices},
\newblock \emph{Electron. Trans. Numer. Anal.}, \textbf{47} (2017), 100-126.

\bibitem{Gallopoulos1992}
\newblock E. Gallopoulos and Y. Saad,
\newblock \doititle{Efficient solution of parabolic equations by Krylov approximation methods},
\newblock \emph{SlAM J. Sci. Statist. Comput.}, \textbf{13} (1992), 1236-1264.

\bibitem{GodsilRoyle2001}
\newblock C. Godsil and G. Royle,
\newblock \emph{Algebraic Graph Theory},
\newblock Springer-Verlag, New York, 2001.

\bibitem{Gutknecht2006}
\newblock M. H. Gutknecht,
\newblock \doititle{Block Krylov space methods for linear systems with multiple right-hand
sides: An introduction},
\newblock \emph{Modern Mathematical Models, Methods and Algorithms for
Real World Systems}, Anshan Ltd, 2006/2007, 420–447.

\bibitem{Hochbruck1997} 
\newblock M. Hochbruck and C. Lubich,
\newblock \doititle{On Krylov subspace approximations to the matrix exponential},
\newblock \emph{SIAM J. Numer. Anal.}, \textbf{34} (1997), 1911-1925.

\bibitem{Jbilou1999}
\newblock K. Jbilou, A. Messaoudi, and H. Sadok,
\newblock \doititle{Global FOM and GMRES algorithms for matrix equations},
\newblock \emph{Appl. Numer. Math.}, \textbf{31} (1999), 49-63.

\bibitem{KondorLafferty2002} 
\newblock R. I. Kondor, and J. Lafferty,
\newblock \doititle{Diffusion kernels on graphs and other discrete input spaces},
\newblock \emph{Proc. of the 19th. Intern. Conf. on Machine Learning ICML02}, 2002, 315-322.

\bibitem{LiesenStrakos2012} 
\newblock J. Liesen and Z. Strakos,
\newblock \emph{Krylov Subspace Methods: Principles and Analysis},
\newblock Oxford University Press, Oxford, 2012.

\bibitem{LopezSimoncini2006} 
\newblock  L. Lopez and V. Simoncini,
\newblock \doititle{Preserving geometric properties of the exponential matrix by block
Krylov subspace methods},
\newblock \emph{BIT}, \textbf{46} (2006), 813–830.

\bibitem{Lund2018}
\newblock K. Lund,
\newblock \emph{A New Block Krylov Subspace Framework with Applications to Functions of Matrices
Acting on Multiple Vectors},
\newblock Ph.D thesis, Temple University, 2018.

\bibitem{Musco2018}
\newblock C. Musco, C. Musco, and A. Sidford,
\newblock \doititle{Stability of the Lanczos Method for Matrix Function Approximation},
\newblock \emph{Proceedings of the Twenty-Ninth Annual ACM-SIAM Symposium on Discrete Algorithms, SODA '18}, 2018, 1605-1624.

\bibitem{Natanson1964} 
\newblock I. P. Natanson,
\newblock \emph{Constructive function theory. Vol. I. Uniform approximation},
\newblock Frederick Ungar Publishing, New York, 1964.

\bibitem{Parlett1987} 
\newblock B. N. Parlett,
\newblock \emph{The symmetric eigenvalue problem},
\newblock SIAM, Philadelphia, 1987.

\bibitem{Pesenson2009}
\newblock I. Z. Pesenson,
\newblock \doititle{Variational Splines and Paley-Wiener Spaces on Combinatorial Graphs}, 
\newblock \emph{Constr. Approx.}, \textbf{29} (2009), 1-21.

\bibitem{Rifkin2003}
\newblock R. Rifkin, G. Yeo, and T. Poggio,
\newblock \doititle{Regularized least-squares classification}, 
\newblock \emph{Nato Science Series Sub Series III Computer and SystemsSciences}, vol. 190, 2003, 131--154.

\bibitem{Romero2017}
\newblock D. Romero, M. Ma, and G. B. Giannakis, 
\newblock \doititle{Kernel-Based Reconstruction of Graph Signals},
\newblock \emph{IEEE Transactions on Signal Processing}, \textbf{65} (2017), 764-778.

\bibitem{Saad1992} 
\newblock Y. Saad,
\newblock \doititle{Analysis of some Krylov subspace approximations to the matrix exponential operator},
\newblock \emph{SIAM J. Numer. Anal.}, \textbf{29} (1992), 209-228.

\bibitem{Saad2003} 
\newblock Y. Saad,
\newblock \emph{Iterative Methods for Sparse Linear Systems},
2nd Edition, SIAM, Philadelphia, 2003.

\bibitem{Schmelzer2004} 
\newblock T. Schmelzer,
\newblock \emph{Block Krylov methods for Hermitian Linear Systems},
\newblock Master's thesis, University of Kaiserslautern, 2004.

\bibitem{Schoelkopf2002} 
\newblock B. Sch\"olkopf and A. Smola,
\newblock \emph{Learning with Kernels},
MIT Press, Cambridge, 2002.

\bibitem{shuman2016}
\newblock D. I. Shuman, B. Ricaud, and P. Vandergheynst,
\newblock \doititle{Vertex-frequency analysis on graphs}, 
\newblock \emph{Appl. Comput. Harm. Anal.}, \textbf{40} (2016), 260-291.

\bibitem{shuman2020}
\newblock D. I. Shuman,
\newblock \doititle{Localized spectral graph filter frames: A unifying framework, survey of design considerations, and numerical comparison},
\newblock \emph{ IEEE Sig. Proc. Mag}, \textbf{37} (2020), 43-63.

\bibitem{simoncini1995}
\newblock V. Simoncini and E. Gallopoulos,
\newblock \doititle{An iterative method for nonsymmetric systems with multiple right-hand
sides}, 
\newblock \emph{SIAM J. Sci. Comput.}, \textbf{16} (1995), 917-933.

\bibitem{simoncini1996}
\newblock V. Simoncini and E. Gallopoulos,
\newblock \doititle{Convergence properties of block GMRES and matrix polynomials}, 
\newblock \emph{Linear Algebra Appl.}, \textbf{247} (1996), 97-119.

\bibitem{SmolaKondor2003}
\newblock A. Smola and R. I. Kondor,
\newblock \doititle{Kernels and Regularization on Graphs}, 
\newblock \emph{Learning Theory and Kernel Machines}, Springer, Berlin, Heidelberg, 2003, 144-158.

\bibitem{stewartleyk1996}
\newblock D. E. Stewart and T. S. Leyk,
\newblock \doititle{Error estimates for Krylov subspace approximations of matrix
exponentials},
\newblock \emph{J. Comput. Appl. Math.}, \textbf{72} (1996), 359-369.

\bibitem{Trefethen2013}
\newblock L. N. Trefethen,
\newblock \emph{Approximation Theory and Approximation Practice},
\newblock {SIAM} (2013).

\bibitem{Vapnik1998}
\newblock V. N. Vapnik,
\newblock \emph{Statistical learning theory},
\newblock{Wiley}, New York, 1998.

\bibitem{Ward2018interpolating}
\newblock J. P. Ward, F. J. Narcowich, and J. D. Ward,
\newblock \doititle{Interpolating splines on graphs for data science applications},
\newblock \emph{Appl. Comput. Harmon. Anal.}, \textbf{49} (2020), 540-557.

\bibitem{Zhu05} 
\newblock X. Zhu,
\newblock \emph{Semi-Supervised Learning with Graphs},
\newblock Ph.D thesis, Carnegie Mellon University, 2005.

 


\end{thebibliography}
\end{document}